\numberwithin{equation}{section} \hyphenation{semi-stable}
\newcommand {\PP}{\mathbb{P}}
\newcommand{\rc}{\mathcal{R}}
 \def\cocoa{{\hbox{\rm C\kern-.13em
      o\kern-.07em C\kern-.13em o\kern-.15em A}}}
\newtheorem{theorem}{Theorem}[section]
\newtheorem{lemma}[theorem]{Lemma}
\newtheorem{proposition}[theorem]{Proposition}
\newtheorem{corollary}[theorem]{Corollary}
 \theoremstyle{definition}
\newtheorem{definition}[theorem]{Definition} \theoremstyle{remark}
\newtheorem{remark}[theorem]{Remark}
\newtheorem{example}[theorem]{Example}
\DeclareMathOperator{\Ann}{Ann}
\DeclareMathOperator{\hess}{hess}
\DeclareMathOperator{\Hess}{Hess}
\definecolor{MyDarkGreen}{cmyk}{0.7,0,1,0}
\begin{document}

\title[Perazzo hypersurfaces and  Lefschetz properties]
{Perazzo hypersurfaces and the weak Lefschetz property}

  \author[R.\ M.\ Mir\'o-Roig]{Rosa M.\ Mir\'o-Roig}
  \address{Facultat de
  Matem\`atiques i Inform\`atica, Universitat de Barcelona, Gran Via de les
  Corts Catalanes 585, 08007 Barcelona, Spain} \email{miro@ub.edu,  ORCID 0000-0003-1375-6547}

  \author[J. Pérez]{Josep Pérez}
 \address{Facultat de
  Matem\`atiques i Inform\`atica, Universitat de Barcelona, Gran Via de les
  Corts Catalanes 585, 08007 Barcelona, Spain} \email{jperezdiez@ub.edu}

\thanks{\textit{Mathematics Subject Classification. 13E10, 13D40, 14M05, 13H10.}}
\thanks{\textit{Keywords}. Perazzo hypersurfaces, weak Lefschetz property, artinian Gorenstein algebras, Hilbert function.}
\thanks{The first author has been partially supported by the grant PID2020-113674GB-I00}

\begin{abstract} We deal with Perazzo hypersurfaces $X=V(f)$ in $\PP^{n+2}$ defined by a homogeneous polynomial $f(x_0,x_1,\dots,x_n,u,v)=p_0(u,v)x_0+p_1(u,v)x_1+\cdots +p_n(u,v)x_n+g(u,v)$, where  $p_0,p_1,\dots ,p_n$ are algebraically dependent but linearly independent forms of degree $d-1$ in $K[u,v]$ and $g$ is a form in $K[u,v]$ of degree $d$. Perazzo hypersurfaces have vanishing hessian and, hence,
the associated graded artinian Gorenstein algebra $A_f$ fails the strong Lefschetz property. In this paper, we first determine the maximum and minimum Hilbert function of $A_f$, we prove that the Hilbert function of $A_f$ is always unimodal and we determine when $A_f$  satisfies the weak Lefschetz property. We illustrate our results with many examples and we show that our results do not generalize to Perazzo hypersurfaces $X=V(f)$ in $\PP^{n+3}$ defined by a homogeneous polynomial $f(x_0,x_1,\dots,x_{n},u,v,w)=p_0(u,v,w)x_0+p_1(u,v,w)x_1+\cdots +p_{n}(u,v,w)x_{n}+g(u,v,w)$, where  $p_0,p_1,\dots ,p_{n}$ are algebraically dependent but linearly independent forms of degree $d-1$ in $K[u,v,w]$ and $g$ is a form in $K[u,v,w]$ of degree $d$.
\end{abstract}

\maketitle

\section{Introduction}
Perazzo hypersurfaces arose as an example of 3-fold with vanishing Hessian, which is not a cone~\cite{P}, contradicting the well-known statement by O. Hesse: any hypersurface $X\subset P^N$ with vanishing hessian is a cone. Actually, Hesse's claim was formally disproved a few years earlier by Gordan and Noether~\cite{GN}, who showed that the statement is true for $N\le 3$ and gave a series of counterexamples for any $N\ge 4$. Maeno and Watanabe~\cite[Theorem~3.1]{MW} established a connection between Lefschetz properties of artinian Gorenstein algebras associated to homogeneous polynomials and the corresponding higher Hessians of the latter. In particular, one of the consequences of their result guarantees the failure of the strong Lefschetz property for any Perazzo hypersurface $V(f)\subset \PP^{N}$, due to the fact that $\hess_f=0$ by construction. However, it is well known that there are Perazzo hypersurfaces $V(f)\subset \PP^{4}$ satisfying the weak Lefschetz property~\cite{A} and \cite{FMMR}. In this article we show that the main results obtained in  \cite{A}, \cite{FMMR} and \cite{MMR23} generalize to Perazzo hypersurfaces $V (f ) \subset \PP^{n+2}$. Specifically, we compute the minimum and maximum Hilbert function for any artinian Gorenstein algebra associated to a Perazzo hypersurface $X\subset  \PP^{n+2}$.
See \cite{BGIZ} for other results on minimal Gorenstein Hilbert functions. Then, using the aforementioned result of Maeno and Watanabe, we establish a necessary and sufficient condition for an artinian Gorenstein algebra associated to a Perazzo hypersurface to have the weak Lefschetz property. As an immediate consequence of this result, we show that artinian Gorenstein algebras associated to Perazzo hypersurfaces with minimum Hilbert function satisfy the weak Lefschetz property, provided that the degree of the Perazzo form is greater or equal than $2n$, while the ones with maximum Hilbert function always fail the weak Lefschetz property. Finally, we show that the Hilbert functions of any artinian Gorenstein algebra associated to a Perazzo hypersurface 
are unimodal, even if they do not satisfy the weak Lefschetz property.     

\medskip

Next we outline the structure of this article. In Section \ref{prelim}, we recall the notion of Hilbert function of a standard graded $K$-algebra, along with a couple of important results, due to Macaulay and Green, which are crucial for characterizing artinian Gorenstein algebras, associated to Perazzo hypersurfaces, with the WLP; secondly, we focus our attention on artinian Gorenstein algebras and we describe how to compute their Hilbert function using Macaulay-Matlis duality; thirdly, we recall weak and strong Lefschetz properties and put some examples holding/failing the weak one; finally, we define the main object of study, that is, Perazzo hypersurfaces. In Section \ref{hvector}, we determine the lower and upper bounds for the Hilbert function of any artinian Gorenstein algebra associated to a Perazzo hypersurface in $\PP^{n+2}$. In Section \ref{lefschetz}, we characterize Perazzo hypersurfaces in $\PP^{n+2}$ whose artinian Gorenstein algebras satisfy the weak Lefschetz property. We also prove that the Hilbert function of any artinian Gorenstein algebra associated to a Perazzo hypersurface is unimodal and we give an example which demonstrates that such result does not generalize to $\PP^{n+m}$, where $m>2$. 

\vskip 4mm

\section{Notation and background material}\label{prelim}

In this section, we collect for the reader’s convenience the definitions and results on Hilbert functions, Lefschetz properties, artinian Gorenstein algebras as well as on Perazzo hypersurfaces that we will use later and we also fix relevant notation.

\subsection{Hilbert function} Throughout this work  $K$ will be an algebraically closed field of characteristic zero.
 Given any standard graded $K$-algebra $A=R/I$ where $R=K[x_0,\dots,x_n]$ and $I$ is a homogeneous ideal of $R$,
we denote by $HF_A:\mathbb{Z} \longrightarrow \mathbb{Z}$ with $HF_A(j)=\dim _K[A]_j$
its {\em Hilbert function}. If $A$ is artinian then, its Hilbert function is
captured in a finite sequence of positive integers, the so-called {\em $h$-vector} $h=(h_0,h_1,\dots ,h_e)$ where $h_i=HF_A(i)>0$ and $e$ is the last index with this property. The integer $e$ is called the {\em socle degree of} $A$.

Given  integers $n, r\ge 1$, we define the  \emph{$r$-th binomial expansion of $n$} as
\[
n=\binom{m_r}{r}+\binom{m_{r-1}}{r-1}+\cdots +\binom{m_e}{e}
\]
where $m_r>m_{r-1}>\cdots >m_e\ge e\ge 1$
are uniquely determined integers (see \cite[Lemma 4.2.6]{BH93}).
We write
\[
n^{<r>}=\binom{m_r+1}{r+1}+\binom{m_{r-1}+1}{r}+\cdots +\binom{m_e+1}{e+1}, \text{ and }
\]
\[
n_{<r>}=\binom{m_r-1}{r}+\binom{m_{r-1}-1}{r-1}+\cdots +\binom{m_e-1}{e}.
\]

The numerical functions $H:\mathbb N \longrightarrow \mathbb N$ that are Hilbert functions of graded standard $K$-algebras were characterized by Macaulay in \cite{Mac27} (see also \cite{BH93}). Indeed, given a numerical function $H:\mathbb N \longrightarrow \mathbb N$ the following conditions are equivalent:
\begin{enumerate}
\item[(i)] There exists a standard graded $K$-algebra $A$ with $H$ as Hilbert function,
\item[(ii)]\label{macaulay} $H$ satisfies the so-called \textbf{Macaulay's inequality}, i.\,e.
\end{enumerate}
\begin{equation}
\label{MacIneq}
H(0)=1\text{, and }H(t+1)\le H(t)^{<t>}\,\,\forall t\ge 1.
\end{equation}

Notice that condition (ii) imposes strong restrictions on the $h$-vector of a standard graded artinian $K$-algebra and, in particular, it bounds its growth. Another restriction comes from the following \textbf{Green's theorem} which we recall for the sake of completeness.

\begin{theorem}
\label{green}
Let $A=R/I$ be an artinian graded algebra and let $\ell\in A_1$ be a general linear form. Let $h_t$ be the entry of degree $t$ of the $h$-vector of $A$. Then the degree $t$ entry $h'_t$ of the $h$-vector of $R/(I,\ell)$ satisfies the inequality
\[
h'_t\le (h_t)_{<t>} \text{ for all } t\ge 1.
\]
\end{theorem}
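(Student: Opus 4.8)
The plan is to reduce the statement to a purely combinatorial inequality for monomial ideals via generic initial ideals, and then settle the combinatorics by comparison with lexicographic segments. First I would fix the reverse-lexicographic (degrevlex) term order with $x_0 > x_1 > \cdots > x_n$ and invoke Galligo's theorem: for a general change of coordinates the generic initial ideal $J := \operatorname{gin}(I)$ is Borel-fixed (strongly stable, since $\operatorname{char} K = 0$) and satisfies $HF_{R/I} = HF_{R/J}$, so the $h$-vector is unchanged. After the generic coordinate change I may take $\ell = x_n$, the smallest variable. The key input here is the Bayer--Stillman lemma, which is special to degrevlex: for a general linear form one has $HF_{R/(I,\ell)}(t) = HF_{R/(J,x_n)}(t)$, i.e.\ the generic hyperplane section is computed by simply setting the last variable of the degrevlex gin to zero. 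Thus it suffices to prove $h'_t \le (h_t)_{<t>}$ in the case where $h_t = \dim_K[R/J]_t$ and $h'_t = \dim_K[R/(J,x_n)]_t$ for a strongly stable monomial ideal $J$.

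Next I would translate everything into counts of standard monomials. For the monomial ideal $J$, the number $h_t$ counts the monomials of degree $t$ not lying in $J$, while $h'_t$ counts those standard monomials of degree $t$ that are additionally not divisible by $x_n$ (equivalently, the degree-$t$ standard monomials of $K[x_0,\dots,x_{n-1}]$). Writing the $t$-th Macaulay expansion $h_t = \binom{m_t}{t} + \binom{m_{t-1}}{t-1} + \cdots + \binom{m_e}{e}$, the quantity $(h_t)_{<t>}$ is obtained by lowering each top index by one. The calibration step is to check that for the lexicographic-segment ideal $L$ having exactly $h_t$ standard monomials in degree $t$, the number of those degree-$t$ lex standard monomials not divisible by $x_n$ is precisely
\[
(h_t)_{<t>} = \binom{m_t-1}{t} + \binom{m_{t-1}-1}{t-1} + \cdots + \binom{m_e-1}{e},
\]
which is a direct binomial bookkeeping of the lex order.

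The heart of the argument, and the main obstacle, is then to show that the lex segment \emph{maximizes} the count of standard monomials avoiding $x_n$ among all strongly stable ideals with the same value $h_t$ in degree $t$. I would establish this by a compression/exchange argument: replace the set of degree-$t$ standard monomials of $J$ by an initial lexicographic segment of the same cardinality, and verify — using the Borel exchange moves $m \mapsto (x_j/x_i)\,m$ for $i<j$ that characterize strong stability — that this replacement does not decrease the number of standard monomials free of $x_n$. Combined with the lex count above, this yields $h'_t \le (h_t)_{<t>}$. The delicate point is precisely the interaction between the Borel/compression moves and divisibility by the last variable; one must track how exchanges push exponents off $x_n$ in the degrevlex-compatible direction, and it is here that the choice of reverse-lex order is essential (with other orders the identification of the generic hyperplane restriction with $x_n = 0$ already fails). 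An alternative route would be Green's original proof, which runs a compression argument directly on $I$ without passing to the gin; but the monomial reduction makes the extremal combinatorics cleanest to organize.
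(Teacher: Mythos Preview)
The paper does not give a proof of this theorem at all: its ``proof'' is the single line \emph{See \cite[Theorem 1]{Gr}}. So there is nothing to compare your argument against within the paper itself; the authors are simply quoting Green's hyperplane restriction theorem as a black box.

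Your sketch is a correct and standard modern route, essentially the textbook proof via generic initial ideals (as in Herzog--Hibi or Bruns--Herzog). The reductions you outline are the right ones: Galligo gives a Borel-fixed $J=\operatorname{gin}(I)$ with the same Hilbert function; the degrevlex identity $\operatorname{in}(gI+(x_n))=\operatorname{in}(gI)+(x_n)$ (which is the precise content of the Bayer--Stillman input you invoke) lets you replace the generic hyperplane by $x_n=0$; and the problem becomes the purely combinatorial statement that among strongly stable monomial ideals with $h_t$ standard monomials in degree $t$, the lex-segment ideal maximizes the number of those standard monomials not divisible by $x_n$, that maximum being $(h_t)_{<t>}$. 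Your identification of the compression/exchange step as the crux is accurate, and the orientation of the inequality you state is correct.

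Green's original argument in \cite{Gr} is organized differently: it works directly with $I$ and a generic linear form and runs an inductive compression without first passing to the gin. The gin approach you chose has the advantage of cleanly separating the ``generic linear algebra'' (absorbed into Galligo and Bayer--Stillman) from the extremal combinatorics, at the cost of importing those results; Green's route is more self-contained but the bookkeeping is heavier. Either is a legitimate proof, and both go well beyond what the paper itself supplies.
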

\begin{proof}
See \cite[Theorem 1]{Gr}.
\end{proof}

\begin{definition}\label{defUnimodal} The $h$-vector $h=(h_0,h_1,\dots ,h_e)$ of a graded artinian $K$-algebra is said to be {\em unimodal} if $h_0\le h_1\le \cdots \le h_j\ge h_{j+1}\ge \cdots \ge h_e$ for some $j$,
\end{definition}

\begin{example}
(1) We consider the monomial artinian ideal
$I = (x^2, y^4, z^4, xy,xz)\subset R=K[x,y,z]$ and $A:= R/I$. The $h$-vector of $A$ is:
$$
(1,   3,  3,  4,  3,  2, 1).
$$
Therefore, it is  unimodal.

(2)  We consider the monomial artinian ideal
$I = (x^3, y^7, z^7, xy^2, xz^2)\subset R=K[x,y,z]$ and $A:= R/I$. The $h$-vector of $A$ is:
$$
(1,   3,  6,  7,  6,  6,  7,  6,  5,  4,  3,  2, 1).
$$
Therefore, it is not unimodal.
\end{example}

As one of the main results in this paper we establish the unimodality of the $h$-vector of any artinian Gorenstein algebra $A_f$ associated to a Perazzo hypersurface $X=V(f)\subset \PP^{n+2}$ (See Definition \ref{perazzohypersur}).

\subsection{Artinian Gorenstein algebras}

Let us  recall the construction of the artinian Gorenstein algebra $A_f$ with Macaulay dual generator a given form $f\in \mathcal{R}=K[y_0,\dots,y_n]$ and the basic facts about {\em Macaulay-Matlis
duality}.

Let $V$ be an $(n+1)$-dimensional $k$-vector space. Set $R=\oplus _{i\ge 0}Sym^{i}V^{*}$ and $\mathcal R=\oplus _{i\ge 0}Sym^{i}V$.
Let $\{x_0,\dots ,x_n\}$, $\{y_0,\dots,y_n\}$ be dual
bases of $V^*$ and $V$, respectively. So, we have
the identifications $R=K[x_0, \dots ,x_n]$ and
$\mathcal R=K[y_0,\dots ,y_n]$, i.e. $R=K[x_0,\dots,x_n]$ is the ring of differential operators acting on the polynomial ring $\mathcal R$, i.\,e.\ $x_i=\frac{\partial}{\partial y_i}$ and $R$ acts on $\rc$ by differentiation.
 Therefore, there are products (see
\cite{FH}; pg. 476)
\[\begin{array}{ccc} Sym^{j}V^{*} \otimes Sym^{i}V & \longrightarrow
& Sym^{i-j}V \\
u\otimes f & \mapsto & u\circ f
\end{array}
\] making $\mathcal R$ into a graded $R$-module. We can see this action
as partial differentiation: if $u(x_0, \dots ,x_n)\in R$ and
$f(y_0, \dots ,y_n)\in \mathcal R$, then $$u\circ
f=u(\partial/\partial y_0, \dots ,
\partial/\partial y_n) f.$$ If $I\subset R$ is a homogeneous
ideal, we define  the {\em Macaulay's inverse system} $I^{-1}$ for $I$ as
   $$I^{-1}:=\{f\in \mathcal R, u\circ f=0 \text{ for all } u\in I \}.$$
   $I^{-1}$ is an $R$-submodule of $\mathcal R$ which inherits a
grading of $\mathcal R$.
Conversely, if $M\subset \mathcal R$ is a graded $R$-submodule,
then $$\Ann(M):=\{u\in R, u\circ f =0 \text{ for all } f\in M \}$$
is a homogeneous ideal in $R$. In classical terminology,
if $u\circ f=0$ and $deg(u) = deg(f)$, then $u$ and $f$ are
said to be {\em apolar} to each other. In fact, the
pairing $$R_{i}\times \mathcal R _{i} \longrightarrow k \quad
\quad  (u,f)\mapsto u\circ f  $$ is exact; it is called the
apolarity or Macaulay-Matlis duality action of $R$ on $\mathcal R$.

For any integer $i$, we have $HF_{R/I}(i)= \dim _k(R/I)_{i}=
\dim_k (I^{-1})_{i}$.  We have a bijective correspondence
\[\begin{array}{ccc}  \{ \text{ Homogeneous ideals } I\subset R \} &
\rightleftharpoons & \{ \text{ Graded } R-\text{submodules of }\mathcal R \} \\
I & \rightarrow & I^{-1} \\ \Ann(M) & \leftarrow & M \end{array} .\]
Moreover, $I^{-1}$ is a finitely generated $R$-module if and only if
$R/I$ is an artinian ring. As a particular case, we have the following result, concerning artinian Gorenstein algebras.

\begin{proposition}\label{characterizationGorenstein} Set $\mathcal{R}=K[y_0,\dots ,y_n]$ and let $R=K[x_0,\dots ,x_n]$ be the ring of differential operators on $\mathcal{R}$. Let $A=R/I$ be a standard  artinian graded $K$-algebra. Then, $A$ is Gorenstein if and only if there is $f\in \mathcal{R}$ such that $A\cong R/\Ann_R(f)$. Moreover, isomorphic Gorenstein algebras are defined by forms equal up to a linear change of variables in $\mathcal{R}$.
\end{proposition}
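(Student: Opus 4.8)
The plan is to derive both assertions from Macaulay--Matlis duality, by translating the Gorenstein condition on $A=R/I$ into the statement that its inverse system $I^{-1}\subseteq\mathcal{R}$ is cyclic. Recall that an artinian graded algebra $A$ is Gorenstein exactly when its socle $\Soc(A)=\{a\in A:\mathfrak{m}a=0\}$, with $\mathfrak{m}=(x_0,\dots,x_n)$, is one-dimensional over $K$. Thus the first claim amounts to the equivalence: $A$ is Gorenstein if and only if $I^{-1}=R\circ f$ for a single form $f$, in which case $I=\Ann(R\circ f)=\Ann_R(f)$.

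The key input is the duality identity $\dim_K\Soc(R/I)=\mu_R(I^{-1})$, where $\mu_R$ denotes the minimal number of homogeneous generators. First I would observe that the perfect apolarity pairing $R_i\times\mathcal{R}_i\to K$ identifies $\mathcal{R}$ with the graded dual $\operatorname{Hom}_K(R,K)$ as $R$-modules, under which $I^{-1}$ corresponds to $A^{\vee}:=\operatorname{Hom}_K(A,K)$ with its contragredient $R$-action. Matlis duality then exchanges socle and top: $\Soc(A)^{\vee}\cong A^{\vee}/\mathfrak{m}A^{\vee}$, and by the graded Nakayama lemma the right-hand side has dimension $\mu_R(A^{\vee})=\mu_R(I^{-1})$, giving the identity. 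Consequently $A$ is Gorenstein $\iff\mu_R(I^{-1})=1\iff I^{-1}$ is cyclic, say $I^{-1}=R\circ f$; applying $\Ann$ and using the bijection recalled above yields $I=\Ann_R(f)$. The converse is immediate: if $I=\Ann_R(f)$ then $I^{-1}=R\circ f$ is cyclic, so $\Soc(A)$ is one-dimensional (spanned in degree $\deg f$) and $A$ is Gorenstein.

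For the uniqueness clause, suppose $\varphi\colon R/\Ann_R(f)\xrightarrow{\ \sim\ }R/\Ann_R(g)$ is an isomorphism of graded $K$-algebras. Since both are standard graded quotients of $R$, $\varphi$ is determined by degree one and lifts to a graded automorphism $\Phi$ of $R$ with $\Phi(\Ann_R(f))=\Ann_R(g)$; equivalently $\Phi=\sigma|_R$ for a unique $\sigma\in\mathrm{GL}(V)$ acting contragrediently on $R=\mathrm{Sym}(V^{*})$. The apolarity action is $\mathrm{GL}(V)$-equivariant (that is, $\sigma(u\circ f)=(\Phi u)\circ(\sigma f)$), so $\Ann_R(\sigma\cdot f)=\Phi(\Ann_R(f))=\Ann_R(g)$. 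Hence $\sigma\cdot f$ and $g$ have the same annihilator, and therefore the same cyclic inverse system; since that system is one-dimensional in top degree and $\deg(\sigma\cdot f)=\deg g$, we get $\sigma\cdot f=c\,g$ for some nonzero $c\in K$. Absorbing $c$, we conclude that $g$ is obtained from $f$ by a linear change of the variables $y_0,\dots,y_n$, as claimed.

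I expect the main obstacle to be the bookkeeping in the duality identity $\dim_K\Soc(A)=\mu_R(I^{-1})$ and in the $\mathrm{GL}(V)$-equivariance used for uniqueness: one must check that, under the apolarity pairing, the differentiation action on $I^{-1}$ matches the contragredient action on $A^{\vee}$, and that the contraction transforms correctly so that annihilators are carried to annihilators. Once the gradings and the direction of the duality are pinned down, both statements follow formally. All of this is classical, going back to Macaulay, so in practice I would expect the paper to cite a standard reference rather than reprove it.
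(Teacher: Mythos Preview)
Your argument is correct, and exactly as you anticipated in your final paragraph, the paper does not reprove the statement: it simply cites \cite[Theorem~2.1]{MW} for the Gorenstein characterization and \cite[Proposition~A.7]{IK} for the uniqueness clause. Your sketch via the identity $\dim_K\Soc(A)=\mu_R(I^{-1})$ and the $\mathrm{GL}(V)$-equivariance of the apolarity action is precisely the classical Macaulay--Matlis argument that underlies those references, so there is no substantive difference in approach beyond the fact that you have unpacked what the paper leaves to the literature.
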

\begin{proof} The fact that  $A$ is Gorenstein if and only if there is $f\in \mathcal{R}$ such that $A\cong R/\Ann_R(f)$ follows from \cite[Theorem~2.1]{MW}. 
The fact that  isomorphic Gorenstein algebras are defined by forms equal up to a linear change of variables in $\mathcal{R}$ immediately follows from \cite[Proposition A.7]{IK}.
\end{proof}

Under the hypothesis of the above proposition, we have that the degree of $f$ coincides with the socle degree of $A_f$. If $f\in \mathcal{R}_d$ then, for any integer $0\le i \le \lfloor d/2 \rfloor$, we have:
\begin{align*}
    h_i &= \dim [A_f]_i= \dim [A_f]_{d-i}=h_{d-i} \\
    &= \dim \left\langle \frac{\partial ^{i}f}{\partial y_0^{i_0}\cdots \partial y_n^{i_n}} \mid i_0+\cdots +i_n=i \right\rangle .
\end{align*}

\subsection{Lefschetz properties}

\begin{definition}
Let $A=R/I$ be a graded artinian $K$-algebra. We say that $A$ has the {\em weak Lefschetz property} (WLP, for short)
if there is a linear form $\ell \in [A]_1$ such that, for all
integers $i\ge0$, the multiplication map
\[
\times \ell: [A]_{i}  \longrightarrow  [A]_{i+1}
\]
has maximal rank, i.e.\ it is injective or surjective.
 In this case, the linear form $\ell$ is called a {\em Lefschetz
element} of $A$. If for the general form $\ell \in [A]_1$ and for an integer $j$ the
map $\times \ell:[A]_{j-1}  \longrightarrow  [A]_{j}$ does not have maximal rank, we will say that the ideal $I$ {\em fails the WLP in
degree $j$.}

$A$ has the {\em strong Lefschetz property} (SLP, for short) if there is a linear form $\ell \in [A]_1$ such that, for all
integers $i\ge0$ and $k\ge 1$, the multiplication map
\[
\times \ell^k: [A]_{i}  \longrightarrow  [A]_{i+k}
\]
has maximal rank.  Such an element $\ell$ is called a {\em strong Lefschetz element} for $A$.

$A$  has the {\em strong Lefschetz property in the narrow sense} if there
exists an element $\ell \in [A]_1$  such that the multiplication map
\[
\times \ell^{e-2i}: [A]_{i}  \longrightarrow  [A]_{e-i}
\]
is bijective for $i=0,\dots , [e/2]$ being $e$ the socle degree of $A$.
\end{definition}

It is well known that the WLP implies the unimodality of the $h$-vector.
If a graded artinian $K$-algebra $A$ has the SLP in the
narrow sense, then the $h$-vector of A is unimodal and symmetric. 

To determine whether an artinian standard graded $K$-algebra $A$ has the WLP/SLP seems a simple problem of linear algebra, but instead it has proven to be extremely elusive. Part of the great interest in the WLP stems from the ubiquity of its presence and there  are a long series of papers determining classes of artinian algebras having/failing the WLP but much more work remains to be done (see \cite{CMM-R}, \cite{MN} and \cite{MMO}). For instance, we know that all artinian complete intersections, in at most three variables and characteristic zero, have the WLP (see \cite{HMNW}). Some partial results for arbitrary complete intersections in four variables have been obtained too~\cite{BMMRN}. More generally, we do know that not all artinian Gorenstein algebras have the WLP property, because some fail the necessary condition of unimodality. Even among Gorenstein algebras with unimodal $h$-vectors WLP does not necessarily hold as Ikeda example showed in \cite{Ike}. 

\begin{example} 
(1) The ideal $I=(x^2,y^2,z^2,xyz)\subset K[x,y,z]$ has the WLP. Since the $h$-vector of $R/I$ is $(1,3,3)$, we only need to check that the map $\times \ell:[R/I]_1\rightarrow [R/I]_2$, induced by $\ell=x+y+z$ is surjective (see \cite[Proposition~2.2]{MMN}), that is, $[R/(I,\ell)]_2=0$. Certainly,
\begin{align*}
    [R/(I,\ell)]_2 &\cong [K[x,y,z]/(x^2,y^2,z^2,xyz,x+y+z)]_2 \\
                &\cong [K[x,y]/(x^2,y^2,x^2+2xy+y^2,xy(x+y))]_2 \\
                &\cong [K[x,y]/(x^2,y^2,xy)]_2=0.
\end{align*}
(2) The ideal $I=(x^5,y^5,z^5,xyz)\subset K[x,y,z]$ fails to have the WLP. Since $I$ is a monomial ideal, it suffices to check that $\ell=x+y+z$ is not a Lefschetz element for $R/I$~\cite[Proposition~2.2]{MMN}. Certainly, the multiplication map
$$
\times \ell:[K[x,y,z]/I]_4\cong K^{12} \rightarrow [K[x,y,z]/I]_5\cong K^{12}
$$
is neither injective nor surjective, because $[R/(I,\ell)]_5 = 1$.
\end{example}

Finally, if  a graded artinian $K$-algebra $A$ has a symmetric $h$-vector, the notion of the SLP on $A$ coincides with the one in the narrow sense. In this work, we will deal with artinian Gorenstein algebras $A$. It is well known that $A$ has symmetric $h$-vector~\cite[Proposition~2.1]{MW}. So, in the subsequent sections, the SLP/WLP will be used in the narrow sense.

\subsection{Perazzo hypersurfaces}

The simplest known counterexample to Hesse's claim, i.\,e.\ a form with vanishing  Hessian which does not define a cone, is $XU^2+YUV+ZV^2$. This example was extended to a class of cubic counterexamples in all dimensions  by Perazzo in \cite{P}. The results of Gordan-Noether and of Perazzo have been recently considered and rewritten in modern language by many authors \cite{B}, \cite{CRS}, \cite{F}, \cite{L}, \cite{GR}, \cite{Wa} and \cite{WB}. Following these papers we define:

\begin{definition}\label{perazzohypersur} Fix integers $n, m\ge 2$. A {\em Perazzo} hypersurface  $X\subset \PP^{n+m}$ of degree $d$ is a hypersurface defined by a form $f\in K[x_0,\dots ,x_n,u_1\dots ,u_m]$ of the following type:
$$ f=x_0p_0+x_1p_1+\cdots +x_np_n+g
$$
where  $p_i\in K[u_1,\dots ,u_m]_{d-1}$ are algebraically dependent but linearly independent and $g\in K[u_1,\dots ,u_m]_{d}$.
\end{definition}

The fact that the $p_i$'s are algebraically dependent implies $\hess_f=0$ and $n+1\le \binom{d+m-1}{m-1}$ (see \cite[Propositions 4.1 and 4.2]{WB}), while the linear independence assures that $V(f)$ is not a cone.

\begin{example}
As a first example of Perazzo hypersurface we have the cubic 3-fold in $\PP^4$ of equation:
$$ f(x_0,x_1,x_2,u,v)=x_0u^2+x_1uv+x_2v^2.
$$
It is a cubic hypersurface with vanishing hessian but not a cone. So, as we pointed out before, it provides the first counterexample to Hesse's claim: any hypersurface $X\subset \PP^N$ with vanishing hessian is a cone (\cite{He1} and \cite{He2}).
\end{example}

\vskip 2mm
Hesse's claim was studied by Gordan and Noether in \cite{GN} for hypersurfaces of degree $d\geq 3$ in $\PP^N$. They proved that it is true for $N\leq 3$ but it is false for any $N\geq 4$. Indeed, they gave a complete classification of the hypersurfaces with vanishing hessian for $N=4$ and a series of examples of hypersurfaces with vanishing hessian not cones for any $N\geq 5$. Subsequently Perazzo in \cite{P} described all cubic hypersurfaces with vanishing hessian for $N=4, 5, 6$.

In \cite{A}, \cite{BGIZ} and \cite{FMMR} the $h$-vector of artinian Gorenstein algebras associated to Perazzo 3-folds in $\PP^4$ were deeply studied (i.e. the case $n=m=2$) and our goal will be to extend these results to the case $m=2$ and $n\ge 2$ and provide examples which show that the analogous results in  the case $m>2$ are no longer true. Notice that we need, at least, $n+1$ linearly independent forms of degree $d-1$ in two variables. Thus, we will assume hereafter that $d\ge n+1$.

\section{Perazzo hypersurfaces in \texorpdfstring{$\PP^{n+2}$}{Lg} and the \texorpdfstring{$h$}{Lg}-vector of the associated Gorenstein algebra}\label{hvector}

This section will be entirely devoted to study the $h$-vectors associated to a Perazzo hypersurface $X=V(f)\subset \PP^{n+2}$. We will determine lower and upper bounds for the $h$-vector of a standard graded artinian Gorenstein algebra $A_f$ associated to a Perazzo hypersurface $X$ in $\PP^{n+2}$ and we will prove that it is always unimodal.

We start with a series of technical results that will play an important role in our goal of determining the $h$-vector of artinian Gorenstein algebras associated to Perazzo hypersurfaces in $\PP^{n+2}$.

\begin{proposition}
\label{exactsequence}
Let $A=A_f$ be an artinian Gorenstein $K$-algebra and set $I=\Ann_R(f)$. Then for every linear form $\ell\in A_1$ the sequence
\begin{equation}
\label{seq}
0\longrightarrow \frac{R}{(I:\ell)}(-1)\longrightarrow \frac{R}{I}\longrightarrow \frac{R}{(I,\ell)}\longrightarrow 0
\end{equation}
is exact. Moreover $\frac{R}{(I:\ell)}$ is an artinian Gorenstein algebra with $\ell\circ f$ as dual generator.
\end{proposition}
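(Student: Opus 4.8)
The plan is to prove the two assertions separately: first the exactness of the sequence, which is pure commutative algebra valid for any homogeneous ideal $I$ and any linear form $\ell$, and then the identification of the dual generator, which is where the Gorenstein/Macaulay--Matlis structure enters.

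For the exact sequence, I would take the left-hand map to be multiplication by $\ell$, i.e.\ $\overline{g}\mapsto\overline{\ell g}$, and the right-hand map to be the canonical surjection $R/I\twoheadrightarrow R/(I,\ell)$; the degree shift $(-1)$ on the first term is exactly what makes multiplication by $\ell$ a degree-preserving graded homomorphism. Well-definedness of the first map is immediate from the definition of the colon ideal $(I:\ell)=\{g\in R : \ell g\in I\}$. For injectivity, if $\overline{\ell g}=0$ in $R/I$ then $\ell g\in I$, so $g\in(I:\ell)$ and $\overline{g}=0$. The image of multiplication by $\ell$ is $(I+\ell R)/I=(I,\ell)/I$, which is precisely the kernel of the canonical surjection, giving exactness in the middle, and surjectivity on the right is clear. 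No step here poses any difficulty.

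For the second assertion, the key point is the ideal identity $(I:\ell)=\Ann_R(\ell\circ f)$. Since $R=K[x_0,\dots,x_n]$ acts on $\rc$ by commuting differential operators, for every $u\in R$ we have
\[
u\circ(\ell\circ f)=(u\ell)\circ f=\ell\circ(u\circ f).
\]
Hence $u\in\Ann_R(\ell\circ f)$ if and only if $(\ell u)\circ f=0$, i.e.\ $\ell u\in\Ann_R(f)=I$, i.e.\ $u\in(I:\ell)$, which proves the claimed equality. Now $\ell\ne 0$ in $A_1=[R/I]_1$ means precisely that $\ell\notin[\Ann_R(f)]_1$, equivalently $\ell\circ f\ne 0$, so $\ell\circ f$ is a genuine nonzero form (of degree $\deg f-1$). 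Applying Proposition~\ref{characterizationGorenstein} to $\ell\circ f$ then shows that $R/(I:\ell)\cong R/\Ann_R(\ell\circ f)$ is an artinian Gorenstein algebra with Macaulay dual generator $\ell\circ f$, as desired.

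The whole argument is short; the only point requiring genuine care is the translation of the colon operation on the apolar side into the action of $\ell$ on $f$ on the dual side, which rests entirely on the commutativity of the differential operators. I would also be sure to record that the degenerate case $\ell\circ f=0$ (where $R/(I:\ell)=0$) is excluded exactly by the hypothesis that $\ell$ is a \emph{nonzero} element of $A_1$.
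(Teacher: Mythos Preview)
Your proof is correct and follows essentially the same approach as the paper's, only more explicitly: the paper obtains the short exact sequence by cutting the four-term sequence
\[
0\longrightarrow \frac{(I:\ell)}{I}(-1)\longrightarrow \frac{R}{I}(-1)\xrightarrow{\;\times\ell\;}\frac{R}{I}\longrightarrow \frac{R}{(I,\ell)}\longrightarrow 0
\]
(which amounts to the same kernel/image checks you wrote out), and dismisses the identification $(I:\ell)=\Ann_R(\ell\circ f)$ as ``a straightforward computation,'' which you have supplied in full.
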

\begin{proof}
We get the result cutting the exact sequence
\[
0\longrightarrow \frac{(I:\ell)}{I}(-1)
\longrightarrow \frac{R}{I}(-1)\xrightarrow{\,\,\,\times\ell\,\,\,}\frac{R}{I}\longrightarrow \frac{R}{(I,\ell)}\longrightarrow 0
\]
into two short exact sequences. The second fact is a straightforward computation.
\end{proof}

 The following lemma plays a key role in the induction step used in the proof on the lower bound of the $h$-vector of an artinian Gorenstein algebra associated to a Perazzo hypersurface in $\PP^{n+2}$. It is a generalization of \cite[Lemma 2.1]{A}

\begin{lemma}
\label{lemmapartials}
Let $f=x_0p_0+x_1p_1+\cdots +x_np_n+g$
 be a Perazzo form of degree $d\ge n+2$  and let $A_f$ be the associated artinian Gorenstein algebra. Then, for a general linear form $\ell\in A_f$, the polynomial $\ell\circ f$ defines a Perazzo form of degree $d-1$.
\end{lemma}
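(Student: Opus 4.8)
The plan is to differentiate $f$ explicitly and recognise $\ell\circ f$ as a degree $(d-1)$ Perazzo form. Via Macaulay--Matlis duality a general linear form $\ell\in [A_f]_1$ is the first-order operator $\ell=a_0\partial_{x_0}+\cdots+a_n\partial_{x_n}+b\,\partial_u+c\,\partial_v$; setting $D:=b\,\partial_u+c\,\partial_v$ one computes
$$
\ell\circ f=\sum_{j=0}^{n}x_j\,(D p_j)+\Big(\sum_{i=0}^{n}a_i p_i+D g\Big)=\sum_{j=0}^{n}x_j\,q_j+h,
$$
where $q_j:=D p_j\in K[u,v]_{d-2}$ and $h:=\sum_i a_i p_i+D g\in K[u,v]_{d-1}$. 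This already has the shape of a Perazzo form of degree $d-1$, so what remains is to check that, for general $\ell$, the coefficient forms $q_0,\dots,q_n$ are algebraically dependent and linearly independent.

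Algebraic dependence is automatic: since $n\ge 2$, the $q_j$ are $n+1\ge 3$ forms in the two variables $u,v$, and the transcendence degree of $K(u,v)$ over $K$ being $2$ forces any three of them to be algebraically dependent (equivalently $\hess_{\ell\circ f}=0$). For linear independence I would work with the single operator $D$ on $K[u,v]_{d-1}$. For $(b,c)\ne 0$ the form $(cu-bv)^{d-1}$ is killed by $D$, and after a linear change of the variables $u,v$ the operator $D$ becomes a nonzero multiple of a single partial derivative, whose kernel on degree $(d-1)$ forms is one-dimensional; hence $\ker\big(D|_{K[u,v]_{d-1}}\big)=\langle (cu-bv)^{d-1}\rangle$. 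Consequently a nontrivial relation $\sum_j\lambda_j q_j=D\big(\sum_j\lambda_j p_j\big)=0$ forces $\sum_j\lambda_j p_j\in\langle (cu-bv)^{d-1}\rangle$, which by the linear independence of the $p_j$ happens with $(\lambda_j)\ne 0$ exactly when $(cu-bv)^{d-1}\in W:=\langle p_0,\dots,p_n\rangle$. Thus the $q_j$ are linearly independent if and only if $(cu-bv)^{d-1}\notin W$.

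The remaining point---and the main obstacle, since it is where genericity of $\ell$ enters---is to show $(cu-bv)^{d-1}\notin W$ for general $(b,c)$. Here the hypothesis $d\ge n+2$ is decisive: it gives $\dim_K W=n+1\le d-1<d=\dim_K K[u,v]_{d-1}$, so $W$ is a proper subspace. As $(b,c)$ ranges over $K^2\setminus\{0\}$ the forms $(cu-bv)^{d-1}$ trace the $(d-1)$-st powers of linear forms, i.e. the rational normal curve in $\PP(K[u,v]_{d-1})\cong\PP^{d-1}$; this curve is irreducible and spans the ambient space, so it cannot lie in the proper linear subspace $\PP(W)$ and meets it in only finitely many points. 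Hence for general $(b,c)$, and a fortiori for general $\ell$, we have $(cu-bv)^{d-1}\notin W$, the $q_j$ are linearly independent, and $\ell\circ f$ is a genuine Perazzo form of degree $d-1$.
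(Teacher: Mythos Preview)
Your proof is correct and follows the same overall outline as the paper: compute $\ell\circ f$, observe it has the right shape, note that algebraic dependence of the new coefficients is automatic since they are at least three forms in two variables, and then verify linear independence for general $\ell$. The one place where you go further than the paper is the linear-independence step. The paper, after a change of the $x$-variables, reduces a hypothetical dependence to some $\tilde p_i$ vanishing and simply asserts that ``this does not occur for a general choice of $\ell$''; your argument makes this precise by identifying the obstruction exactly as $(cu-bv)^{d-1}\in W=\langle p_0,\dots,p_n\rangle$ and then invoking the nondegeneracy of the rational normal curve in $\PP(K[u,v]_{d-1})$ to see that this fails for all but finitely many $[b:c]$. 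This is where the hypothesis $d\ge n+2$ is actually used (to make $W$ a proper subspace), and your argument makes that role explicit, whereas in the paper's proof the use of $d\ge n+2$ is only implicit in the genericity assertion.
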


\begin{proof}
Since $V(f)$ is not a cone, we can write $\ell=a_0y_0+a_1y_1+\cdots +a_ny_n+b_0U+b_1V$ for some coefficients $a_i,b_j\in K$ not all zero. Then $\ell$ acts on $f$ as follows;
\begin{equation*}
\begin{split}
\ell\circ f=
x_0\Tilde{p_0}+x_1\Tilde{p_1}+\cdots +x_n\Tilde{p_n}+\left(a_0p_0+a_1p_1+\cdots +a_np_n+b_0\frac{\partial g}{\partial u}+b_1\frac{\partial g}{\partial v}\right)
\end{split}
\end{equation*}
with
$$
\Tilde{p_i}=b_0\frac{\partial p_i}{\partial u}+b_1\frac{\partial p_i}{\partial v} \quad \text{ for } i=0,1, \dots ,n.
$$

The form $\ell\circ f$ has degree $d-1\ge n+1$. Since $\tilde{p}_0,\dots ,\tilde{p}_n$ are algebraically dependent ($n+1\ge 3$), we only need to check that 
$\tilde{p}_0,\dots ,\tilde{p}_n$ are linearly independent.
The polynomials $\Tilde{p_0},\Tilde{p_1},\dots ,\Tilde{p_n}$ are linearly independent, for otherwise $\dim\Ann_R(\ell\circ f)_1 > 0$ and hence $\tilde{p}_i=0$ for some $i$, that is, $b_0\frac{\partial p_i}{\partial u} +b_1 \frac{\partial p_i}{\partial v}=0$ which does not occur for a general choice of $\ell $.
\end{proof}

\begin{lemma}\label{key123} Let $d\geq n+2$. Let $S=K[x_0,\dots ,x_n,u,v]$ and $R=K[y_0,\dots ,y_n,U,V]$ be the ring of differential operators on $S$. Let  $A_f=R/\Ann_R(f)$ be  the
 artinian Gorenstein algebra associated to a Perazzo hypersurface $X=V(f)$ of degree $d$ in $\PP^{n+2}$. It holds:
 \begin{itemize}
 \item[(1)] $h_1=h_{d-1}=n+3$,
 \item[(2)] $h_2=h_{d-2}\ge n+4$, and
 \item[(3)] The Sperner number is at most $d+2$, i.e., the maximum value of the $h$-vector is $\le d+2$.
 \end{itemize}
\end{lemma}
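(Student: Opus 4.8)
The plan is to run everything through the apolarity description of $A_f=R/\Ann_R(f)$, where $R=K[y_0,\dots,y_n,U,V]$ acts on $S=K[x_0,\dots,x_n,u,v]$ via $y_i=\partial/\partial x_i$, $U=\partial/\partial u$, $V=\partial/\partial v$, so that $h_t=\dim[A_f]_t$ equals the dimension of the span of the order-$t$ partial derivatives of $f$. The decisive structural observation is that $f$ is \emph{linear} in $x_0,\dots,x_n$; hence $y_iy_j\in\Ann_R(f)$, and every nonzero order-$t$ partial of $f$ is of one of two kinds: the derivatives $\partial_u^b\partial_v^cf$ with $b+c=t$ (each carrying the $x$-linear part $\sum_i x_i\,\partial_u^b\partial_v^cp_i$ together with the term $\partial_u^b\partial_v^cg$), and the derivatives $\partial_{x_i}\partial_u^b\partial_v^cf=\partial_u^b\partial_v^cp_i$ with $b+c=t-1$, which lie entirely in $K[u,v]_{d-t}$. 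Writing $\pi$ for the projection of $S_{d-t}$ onto its $x$-linear part (killing the monomials in $u,v$ alone), this gives the basic splitting
\[
h_t=\dim \pi([A_f]_t)+\dim\big([A_f]_t\cap\ker\pi\big),
\]
where $\pi([A_f]_t)=L_t:=\langle \sum_i x_i\,\partial_u^b\partial_v^cp_i : b+c=t\rangle$ is spanned by at most $t+1$ elements, and $[A_f]_t\cap\ker\pi$ contains $W_t:=\langle \partial_u^b\partial_v^cp_i : b+c=t-1,\ 0\le i\le n\rangle\subseteq K[u,v]_{d-t}$.

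For (1) I use that $h_1=\dim R_1-\dim\Ann_R(f)_1=n+3$ precisely because $V(f)$ is not a cone (equivalently, the first partials $p_0,\dots,p_n,U\circ f,V\circ f$ are linearly independent: a dependence would produce a nonzero first-order constant-coefficient operator $a\partial_u+b\partial_v$ annihilating every $p_i$, forcing all $p_i$ proportional, against linear independence). Gorenstein symmetry of the $h$-vector then gives $h_{d-1}=h_1=n+3$. Part (3) is immediate from the splitting: since $\dim L_t\le t+1$ and $[A_f]_t\cap\ker\pi\subseteq K[u,v]_{d-t}$ has dimension at most $d-t+1$, we get $h_t\le (t+1)+(d-t+1)=d+2$ for every $t$, so the Sperner number is at most $d+2$.

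The substance of the lemma is (2). From the splitting, $h_2\ge \dim L_2+\dim W_2$, so it suffices to prove $\dim L_2=3$ and $\dim W_2\ge n+1$ (with $h_2=h_{d-2}$ again by Gorenstein symmetry). For $\dim L_2=3$ I argue that a linear dependence among $\sum_i x_i\partial_u^2p_i,\ \sum_i x_i\partial_u\partial_vp_i,\ \sum_i x_i\partial_v^2p_i$ is exactly a nonzero binary quadratic operator $Q(\partial_u,\partial_v)$ with $Qp_i=0$ for all $i$; factoring $Q$ over $K$ into two linear forms and changing coordinates in $(u,v)$, either $\partial_u^2$ or $\partial_u\partial_v$ annihilates all $p_i$, forcing $P:=\langle p_0,\dots,p_n\rangle$ into the two-dimensional space $\langle uv^{d-2},v^{d-1}\rangle$ or $\langle u^{d-1},v^{d-1}\rangle$; since $\dim P=n+1\ge3$, no such $Q$ exists.

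The crux --- and the step I expect to be the main obstacle --- is the bound $\dim W_2=\dim(\partial_uP+\partial_vP)\ge n+1=\dim P$. I plan to isolate this as a self-contained statement about binary forms: \emph{if $W\subseteq K[u,v]_e$ is a proper subspace $(\dim W\le e)$, then $\dim(\partial_uW+\partial_vW)\ge\dim W$}, proved by induction on $e$. In the step, if $v^e\notin W$ (resp. $u^e\notin W$) then $\partial_u$ (resp. $\partial_v$) is injective on $W$ and we are done; otherwise $u^e,v^e\in W$, both $\partial_uW,\partial_vW$ have dimension $\dim W-1$, and if $\dim(\partial_uW+\partial_vW)<\dim W$ they must coincide with a common \emph{proper} subspace $W'\subseteq K[u,v]_{e-1}$. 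Here I invoke the Poincaré/closed-form identity $\partial_v\partial_uw=\partial_u\partial_vw$, which embeds the injective gradient image $\nabla W=\{(\partial_uw,\partial_vw)\}$ into $\ker\big(\delta:W'\times W'\to K[u,v]_{e-2},\ (a,b)\mapsto\partial_va-\partial_ub\big)$, whose image is $\partial_uW'+\partial_vW'$; the dimension count then forces $\dim(\partial_uW'+\partial_vW')\le\dim W'-1$, contradicting the inductive hypothesis applied to the proper subspace $W'$. Applying this to $W=P\subseteq K[u,v]_{d-1}$ --- which is proper exactly because the hypothesis $d\ge n+2$ gives $\dim P=n+1\le d-1$ --- yields $\dim W_2\ge n+1$ and hence $h_2\ge n+4$. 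The delicate points are that properness survives the descent $W\rightsquigarrow W'$ (again guaranteed by $d\ge n+2$) and the exactness bookkeeping in the closed-form step; everything else is routine.
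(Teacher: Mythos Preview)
Your proof is correct and follows the same skeleton as the paper: Gorenstein symmetry for the equalities $h_i=h_{d-i}$, the vanishing $y_iy_j\in\Ann_R(f)$, and the splitting of the span of $t$-th partials into an $x$-linear piece and a piece lying in $K[u,v]_{d-t}$. Parts (1) and (3) are argued essentially the same way in both.

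The genuine difference is in part (2). For the claim that the three pure $(u,v)$-second partials of $f$ contribute $3$ (your $\dim L_2=3$), the paper simply asserts that this dimension equals $\min(d-1,3)=3$ with a one-line justification; your argument via factoring a hypothetical annihilating binary quadratic operator $Q(\partial_u,\partial_v)$ and bounding $\dim P$ by $2$ is sharper and actually complete. For the bound on the mixed piece (your $\dim W_2\ge n+1$), the paper does a short case split on whether $d-2<n+1$ or $d-2\ge n+1$ and counts surviving monomials after one differentiation; this is correct in spirit but loosely phrased. Your route isolates the self-contained statement ``if $W\subsetneq K[u,v]_e$ then $\dim(\partial_uW+\partial_vW)\ge\dim W$'' and proves it by a clean induction using the closed-form/Poincar\'e kernel argument. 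This buys you a genuinely general lemma (valid for any proper subspace, not just the Perazzo $P$), at the cost of a slightly longer proof; the paper's case analysis is shorter but relies on the specific numerics $d\ge n+2$ at each step. Both lead to $h_2\ge 3+(n+1)=n+4$.
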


\begin{proof} First of all, since $A_f$ is Gorenstein, $h_i = h_{d-i}$, so we only need to check the values $1\le i \le \lfloor d/2 \rfloor$:

\vskip 2mm
  \noindent  (1) $h_1=\dim A_1=\dim R_1-\dim \Ann_R(f)_1$. Since $p_0(u,v), p_1(u,v), \dots ,p_n(u,v)$ are $K$-linearly independent, we have $\dim \Ann _R(f)_1=0$ and, therefore, $h_1=n+3$.
    
    \vskip 2mm
  \noindent  
  (2) Clearly, $\frac{\partial^{2}f}{\partial x_j^2}=0$ for any $j=0,\dots,n$ and hence, by Macaulay-Matlis duality,
 
\begin{align*}
    h_2 &= \dim \left\langle \frac{\partial ^{2}f}{\partial x_0^{i_0}\cdots \partial x_n^{i_n}\partial u^{j_0}\partial v^{j_1}} \mid i_0+\cdots +i_n+j_0+j_1=2 \right\rangle \\
    &=\dim \left\langle \frac{\partial ^{2}f}{\partial u^{j_0}\partial v^{j_1}} \mid j_0+j_1=2 \right\rangle \\
    &+  \dim \left\langle \frac{\partial ^{2}f}{\partial x_0^{i_0}\cdots \partial x_n^{i_n}\partial u^{j_0}\partial v^{j_1}} \mid i_0+\cdots +i_n=1; j_0+j_1=1 \right\rangle.
\end{align*}
Notice that, since $d-1 \ge n$, in all the $p_0,\dots,p_n$ at least one of the two variables has degree greater or equal than $n$. But $n\ge 2$. Thus
$$
\dim \left\langle \frac{\partial ^{2}f}{\partial u^{j_0}\partial v^{j_1}} \mid j_0+j_1=2 \right\rangle = min\left(\binom{d-2+1}{1}, \binom{2+1}{1}\right)=3
$$
since $d-2+1 \ge 3$. As for the second contribution to $h_2$, one obtains a lower bound of $n+1$. Indeed: if $d-2 < n+1$, that is $d=n+2$, among the $n+1$ linearly independent forms of degree $d-1$ obtained when differentiating with respect to $x_0,\dots,x_n$, there will remain exactly $d-2+1=n+1$ after differentiating either with respect to $u$ or $v$, which is the maximum amount of such forms of degree $d-2$ in two variables; while if $d-2\ge n+1$, there will remain, at least, all those $n+1$ linearly independent forms. Therefore, $h_2 \ge 3 + n+1 = n+4$.
    
    \vskip 2mm
  \noindent  (3) Again, using Macaulay-Matlis duality and the fact that the maximum amount of linearly independent forms of degree $d-i$ in two variables is $d-i+1$, one obtains $h_i\le (i+1) + (d-i+1) = d+2$.

\end{proof}

We are now ready to describe the componentwise minimal  possible $h$-vectors of an artinian Gorenstein algebra $A_f$ associated to a Perazzo hypersurface in $\PP^{n+2}$.

\begin{theorem}\label{lower} Let $d\geq n+1$. Let $S=K[x_0,\dots ,x_n,u,v]$ and $R=K[y_0,\dots ,y_n,U,V]$ be the ring of differential operators on $S$. The minimum $h$-vector of  the
 artinian Gorenstein algebra $A_f=R/\Ann_R(f)$ associated to a Perazzo hypersurface $X=V(f)$ of degree $d$ in $\PP^{n+2}$ is:
$$
h_i(A_f) = \begin{cases}min(2n+2,d+2,n+2+i) \quad \text{for } 1\le i \le \lfloor d/2 \rfloor \\ \text{symmetry} \end{cases} .
$$
\end{theorem}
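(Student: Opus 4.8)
The plan is to split the statement into a lower bound valid for \emph{every} Perazzo form and a single form attaining it. Throughout write $P=\langle p_0,\dots,p_n\rangle\subseteq K[u,v]_{d-1}$, so $\dim_K P=n+1$ by linear independence. By Macaulay--Matlis duality and the fact that $f$ is linear in $x_0,\dots,x_n$, every order-$i$ partial of $f$ that differentiates twice in the $x$'s vanishes; hence $[A_f]_i$ is spanned by two kinds of elements: the pure $(u,v)$-forms $U_i:=\langle \partial^{i-1}p_j/\partial u^a\partial v^b\mid a+b=i-1,\ 0\le j\le n\rangle\subseteq K[u,v]_{d-i}$ (from one $x$-derivative), and the elements $\sum_j x_j(\partial^i p_j/\partial u^a\partial v^b)$ together with a pure tail coming from $g$ (no $x$-derivative). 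Projecting the latter onto its $x$-linear part yields $\pi(W_i)=\{\,\sum_j x_j(\mu\circ p_j)\mid \mu\in K[U,V]_i\,\}$. Since $U_i$ sits in $x$-degree $0$ while $\pi(W_i)$ records the $x$-degree $1$ part, these pieces lie in complementary degrees, so $h_i\ge \dim\pi(W_i)+\dim U_i$ for every $f$, with equality when $g=0$. (One could instead run the induction on $d$ suggested by Proposition~\ref{exactsequence} and Lemma~\ref{lemmapartials}, but the inductive step then needs a lower bound on the cokernel $R/(I,\ell)_i$ of essentially the same difficulty, so I would argue directly.)

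Next I would package both pieces into one object: the apolar algebra $B=K[U,V]/\Ann(P)$ of the linear system $P$. Its inverse system is the $K[U,V]$-module generated by $P$ in degree $d-1$, so $HF_B(d-1-k)=\dim\partial^{k}(P)$; in particular $\dim U_i=HF_B(d-i)$, while $\dim\pi(W_i)=\dim K[U,V]_i-\dim\Ann(P)_i=HF_B(i)$. Hence $h_i\ge HF_B(i)+HF_B(d-i)$, where $B$ is a standard graded artinian quotient of the two-variable ring $K[U,V]$, with socle degree $d-1$ and $HF_B(d-1)=\dim P=n+1$.

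The key estimate for the lower bound is the single inequality $HF_B(t)\ge\min(t+1,n+1)$ for $0\le t\le d-1$. When $t\le n$ this is sharp: a nonzero $\mu\in K[U,V]_t$ factors into $t$ linear differential operators, each with one-dimensional kernel on $K[u,v]_{d-1}$, so $\dim\ker(\mu\circ)\le t<n+1=\dim P$ and no such $\mu$ kills all of $P$; thus $\Ann(P)_t=0$ and $HF_B(t)=t+1$, giving in particular $HF_B(n)=n+1=HF_B(d-1)$. For $t>n$ I would invoke unimodality of the Hilbert function of a two-variable graded algebra to conclude $HF_B(t)\ge n+1$ on all of $[n,d-1]$. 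Feeding $t=i$ and $t=d-i$ into $h_i\ge HF_B(i)+HF_B(d-i)$ gives $h_i\ge\min(i+1,n+1)+\min(d-i+1,n+1)$, and a short case check in the ranges $i\le n$ versus $i>n$ and $i\le d-n$ versus $i>d-n$ (using $d-i\ge i$ when $i\le\lfloor d/2\rfloor$) shows the right-hand side equals $\min(2n+2,d+2,n+2+i)$.

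For realization I would take the clustered form $f=\sum_{j=0}^{n}x_j\,u^{d-1-j}v^{j}$, so $g=0$: the $p_j$ are linearly independent (whence $V(f)$ is not a cone and $h_1=n+3$) and automatically algebraically dependent since $n+1\ge 3$, so $f$ is a genuine Perazzo form. A direct monomial computation gives $\dim\partial^{k}(P)=\min(n+1,d-k)$, i.e. $HF_B(t)=\min(t+1,n+1)$, and because $g=0$ forces the equality $h_i=HF_B(i)+HF_B(d-i)$, this $f$ attains the bound in every degree simultaneously; the displayed vector is therefore the componentwise minimum and it is realised. The main obstacle is precisely the lower bound on $\dim U_i=HF_B(d-i)$: the elementary descent from the socle, $HF_B(d-1-k)\ge (n+1)-k$, gives only the weak bound $HF_B(d-i)\ge n+2-i$, and it is unimodality of two-variable Hilbert functions (equivalently, that differentiating a linear system of binary forms cannot drop its dimension while the target still has room) that upgrades this to $\ge n+1$. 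I would establish that unimodality via a general linear form $\lambda$: since $B/\lambda B$ is a quotient of a one-variable polynomial ring, $HF_{B/\lambda B}$ is a $0$--$1$ sequence that is $1$ then $0$, which forces the first difference of $HF_B$ to change sign at most once.
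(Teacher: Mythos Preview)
Your argument is sound in its architecture and takes a route genuinely different from the paper's. The paper proves the lower bound by induction on $d$: it handles $d=n+1$ and $d=n+2$ directly, and for the step from $d$ to $d+1$ it feeds a general $\ell$ into the exact sequence of Proposition~\ref{exactsequence}, invokes Lemma~\ref{lemmapartials} to know that $\ell\circ f$ is again a Perazzo form, applies the inductive hypothesis to $A'=A_{\ell\circ f}$, and picks up the extra $+1$ from the cokernel $A_f/(\ell)$. You sidestep the induction entirely by introducing the auxiliary two-variable algebra $B=K[U,V]/\Ann(P)$ and establishing the single inequality $h_i\ge HF_B(i)+HF_B(d-i)$; this is more structural, makes the role of the binary linear system $P$ transparent, and pinpoints exactly why two $(u,v)$-variables are special. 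Both proofs use the same minimal example $f=\sum_{j=0}^n x_j\,u^{d-1-j}v^j$.

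There is, however, a real gap in your last paragraph. Your unimodality argument for $HF_B$ via a general linear $\lambda$ tacitly assumes that $\times\lambda\colon B_{t-1}\to B_t$ is injective, so that $HF_B(t)-HF_B(t-1)=HF_{B/\lambda B}(t)$; but $B$ is artinian, so every linear form is a zero-divisor and the kernel $(0:_B\lambda)$ contributes a correction term you have not controlled. The conclusion you need is nevertheless true, and two clean fixes are available. Either cite that every graded artinian quotient of $K[U,V]$ has the weak Lefschetz property in characteristic zero (see \cite{HMNW}), so that $\times\lambda$ has maximal rank in each degree and unimodality follows in the standard way; or bypass $\lambda$ altogether and use Macaulay's inequality~(\ref{MacIneq}): since $HF_B(s)\le s+1$ in two variables, whenever $HF_B(t)<HF_B(t-1)$ one has $HF_B(t)\le t$ and hence $HF_B(t+1)\le HF_B(t)^{\langle t\rangle}=HF_B(t)$, so $HF_B$ never rises again after its first drop. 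With either fix, the equalities $HF_B(n)=HF_B(d-1)=n+1$ force $HF_B(t)\ge n+1$ on the whole interval $[n,d-1]$, and your bound $h_i\ge\min(i+1,n+1)+\min(d-i+1,n+1)=\min(2n+2,d+2,n+2+i)$ follows.
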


\begin{proof} The proof has two parts. We start exhibiting an example with such $h$-vector. Consider $f=x_0u^{d-1}+x_1u^{d-2}v+\cdots +x_{n}u^{d-1-n}v^n$ and let us compute the $h$-vector of the associated artinian Gorenstein algebra $A_f$. Clearly,
\begin{align*}
    h_i(A_f) &= \dim \left\langle \frac{\partial ^{i}f}{\partial u^{j_0}\partial v^{j_1}} \mid j_0+j_1=i \right\rangle \\
    &+  \dim \left\langle \frac{\partial ^{i}f}{\partial x_0^{i_0}\cdots \partial x_n^{i_n}\partial u^{j_0}\partial v^{j_1}} \mid i_0+\cdots +i_n=1; j_0+j_1=i-1 \right\rangle.
\end{align*}
Let us compute the first contribution. Since the degree of $v$ is not greater than $n$ in any term of $f$ and, moreover, $i\le \lfloor d/2 \rfloor$, then

$$
\dim \left\langle \frac{\partial ^{i}f}{\partial u^{j_0}\partial v^{j_1}} \mid j_0+j_1=i \right\rangle = \begin{cases} 
i+1 \quad \text{ if  } i\le n \\
n+1 \quad \text{ if  } i > n 
\end{cases} .
$$

As for the second contribution, we distinguish two cases:
\begin{itemize}
    \item If $d-i<n+1$, among the $n+1$ linearly independent forms of degree $d-1$ obtained when differentiating with respect to $x_0,\dots,x_n$, there will remain only $d-i+1$ after differentiating with respect to $u,v$, for this is the maximum amount of such forms of degree $d-i$ in two variables;
    \item if $d-i \ge n+1$, we will just obtain those n+1 linearly independent forms, because we are bounded by the maximum degree of $v$. 
\end{itemize}
(Notice that the first case is possible only if $d\le 2n$, that is, $i\le n$.) This yields only three possible contributions to $h_i$, that is, $2n+2$, $d+2$ or $n+2+i$. Therefore,
$$
h_i(A_f) = min(2n+2,d+2,n+2+i).
$$

We will now prove that the cited t-uple is less than any possible $h$-vector associated to a Perazzo hypersurface  $X=V(f)$ of degree $d$ in $\PP^{n+2}$, with respect to the termwise order. Our proof goes by induction on $d$. Assume that  $d=n+1$. Since $p_0,\dots ,p_n\in K[u,v]_{n}$ are linearly independent, we necessarily have
$f=x_0u^{n}+x_1u^{n-1}v+\cdots +x_{n}v^n+g(u,v)$ with $g(u,v)\in K[u,v]_{n+1}$, whose $h$-vector is, according to previous computation, 
\begin{equation}
    \label{hVectorMinD}
    (1,n+3, n+3, n+3, \dots, n+3,n+3,n+3,1).
\end{equation}

Assume now that $d=n+2$, $f=x_0p_0+x_1p_1\cdots +x_{n}p_n+g(u,v)$, with $p_i\in K[u,v]_{n+1}$ and $g(u,v)\in K[u,v]_{n+2}$. Take  a general linear form $\ell \in A_f$ and define the Perazzo form $\ell \circ f$  (Lemma~\ref{lemmapartials}). Then, $A':=A_{\ell\circ f}$ is an artinian Gorenstein algebra with $\ell\circ f$ as dual generator (Proposition~\ref{exactsequence}). But the latter has degree $n+1$ (Lemma~\ref{lemmapartials}) and hence $h_i(A')=n+3$ for all $1\le i \le d-1$, as we just proved above. Now consider the exact sequence of Proposition~\ref{exactsequence}
$$ 
0\longrightarrow A'(-1) \longrightarrow  A_f\longrightarrow  A_f/(\ell ) \longrightarrow 0 .
$$
On the one hand, $h_{d-2}(A_f)=h_2(A_f)=n+4$ (Lemma~\ref{key123} (2) and (3)). On the other hand $h_{d-2}(A'(-1))=h_2(A'(-1))=h_1(A')=n+3$. Thus, $h_{d-2}(A_f/\ell)=1$ and hence $h_i(A_f/(\ell)) \ge 1$ for all $2\le i \le d-2$. Therefore $h_i(A_f)\ge n+4$ for all $2\le i \le d-2$, because $h_i(A'(-1))=n+3$ for all those values of $i$. We now assume the result is true for $d\ge n+2$ and we will prove it for $d+1$. Let $f=x_0p_0+\cdots + x_np_n + g(u,v)$, with $p_i\in K[u,v]_{d}$, $g\in K[u,v]_{d+1}$, and take a general linear form $\ell\in A_f$. Then $\ell\circ f$ defines a Perazzo form of degree $d$ associated to $A':=A_{\ell\circ f}$. In consequence, by induction hypothesis: if $1\le i \le n$, then $h_i(A'(-1))=h_{i-1}(A')\ge min(n+1+i,d+2)$ and hence $h_i(A_f) \ge min(n+2+i,d+3)$; if $n < i \le \lfloor d/2 \rfloor$, then  $h_i(A_f)=h_i(A'(-1))\ge 2n+2$. Therefore, $h_i(A_f) \ge min(2n+2,n+2+i,d+3)$ for all $1\le i \le d-1$ and the proof is complete.
\end{proof}

In the next theorem we  describe the componentwise maximal   possible $h$-vectors of an artinian Gorenstein algebra $A_f$ associated to a Perazzo hypersurface in $\PP^{n+2}$.

\begin{theorem}\label{upper}
Let $d\geq n+1$. Let $S=K[x_0,\dots ,x_n,u,v]$ and $R=K[y_0,\dots ,y_n,U,V]$ be the ring of differential operators on $S$. The maximum $h$-vector of the artinian Gorenstein algebra $A_f=R/\Ann_R(f)$ associated to a Perazzo hypersurface $X=V(f)$ of degree $d$ in $\PP^{n+2}$ is:
$$
h_i = \begin{cases}
    min((n+2)i+1,d+2) \quad \text{for } 1\le i \le \lfloor d/2 \rfloor \\
    \text{symmetry}
\end{cases} .
$$
\end{theorem}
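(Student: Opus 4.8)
The plan is to mirror the two-part structure of the proof of Theorem~\ref{lower}: first establish the inequality $h_i\le \min((n+2)i+1,\,d+2)$ for \emph{every} Perazzo form, and then exhibit a single Perazzo form attaining equality in every degree. The bound $h_i\le d+2$ is already Lemma~\ref{key123}(3), so the real content of the upper bound is the growth estimate $h_i\le (n+2)i+1$. Rather than induct on $d$ via the exact sequence (which would force me to control the non-Gorenstein quotient $A_f/(\ell)$ and leads to a circular use of Green's theorem), I would argue directly from Macaulay--Matlis duality.

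First I would use that $h_i=\dim_K U_i$, where $U_i\subset S_{d-i}$ is the span of all order-$i$ partial derivatives of $f$. Since $f=x_0p_0+\cdots+x_np_n+g$ is linear in $x_0,\dots,x_n$, any monomial operator using two or more $x$-derivatives kills $f$, so $U_i$ lives inside the $x$-graded decomposition
\[
S_{d-i}=K[u,v]_{d-i}\ \oplus\ \Bigl(\bigoplus_{j=0}^{n} x_j\,K[u,v]_{d-1-i}\Bigr),
\]
whose summands have $x$-degree $0$ and $1$. Projecting $U_i$ onto the $x$-degree-$1$ summand and applying rank--nullity gives
\[
h_i=\dim\bigl(U_i\cap K[u,v]_{d-i}\bigr)+\dim\pi_1(U_i).
\]
The image $\pi_1(U_i)$ is spanned by the $i+1$ forms $\sum_j x_j\,\partial^i p_j/\partial u^a\partial v^b$ with $a+b=i$, so $\dim\pi_1(U_i)\le i+1$.

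The main point is to bound the $x$-free part $U_i\cap K[u,v]_{d-i}$. It is generated by the $(n+1)i$ single-$x$-derivative partials $\partial^{i-1}p_j/\partial u^a\partial v^b$ (with $a+b=i-1$, $0\le j\le n$), together with those $x$-free combinations $\sum_{a+b=i}\lambda_{a,b}\,\partial^i g/\partial u^a\partial v^b$ arising when the $x$-degree-$1$ parts of the pure $u,v$-partials cancel; the latter are indexed by the kernel of the projection, so they span a space of dimension at most $(i+1)-\dim\pi_1(U_i)$. Adding the two contributions gives $\dim(U_i\cap K[u,v]_{d-i})\le (n+1)i+(i+1)-\dim\pi_1(U_i)$, and substituting back cancels $\dim\pi_1(U_i)$ to yield $h_i\le (n+1)i+(i+1)=(n+2)i+1$. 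Combined with Lemma~\ref{key123}(3) this proves $h_i\le\min((n+2)i+1,d+2)$ for all Perazzo forms.

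Finally I would produce a Perazzo form realizing equality simultaneously in every degree. Because any $n+1\ge 3$ forms in the two variables $u,v$ are automatically algebraically dependent, the only genuine constraint on $p_0,\dots,p_n$ is linear independence, so it suffices to take a general such tuple together with a general $g$. For a general choice the relevant catalecticant-type matrices attain maximal rank, whence the $x$-free partials span a space of dimension $\min((n+1)i,\,d-i+1)$ while the $x$-degree-$1$ projection is surjective of rank $i+1$; the two cases combine to give exactly $\min((n+2)i+1,d+2)$. Since each $h_i$ is lower semicontinuous on the irreducible parameter space of Perazzo forms, a general form attains the maximum in all degrees at once. I expect this attainment step to be the main obstacle: one must certify the simultaneous maximality of all these ranks for a \emph{single} form respecting the Perazzo constraints, and the cleanest route is to reduce it by semicontinuity to one explicit tuple, e.g.\ spreading out the $u,v$-bidegrees of the $p_j$ so that their iterated partials stay as independent as the dimension of $K[u,v]_{d-i}$ allows.
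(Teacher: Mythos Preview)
Your upper-bound argument is correct and essentially the same as the paper's: both observe that $U_i$ is spanned by the $i+1$ pure $u,v$-partials of $f$ together with the $(n+1)i$ order-$(i-1)$ partials of the $p_j$, and then add the two dimension bounds. Your rank--nullity treatment is in fact a bit more careful than the paper's phrasing, which simply writes $h_i$ as the sum of the two contributions without commenting on possible overlap; your cancellation of $\dim\pi_1(U_i)$ makes explicit why the sum of the two naive bounds is still an upper bound for $h_i$.

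Where you diverge from the paper is in the attainment step. The paper writes down an explicit Perazzo form (built by partitioning the monomials $u^{d-1-k}v^k$ into $n+1$ consecutive blocks of length $r$ or $r+1$, where $d=(n+1)r+\epsilon$) and asserts that it realizes the maximum in every degree. You instead appeal to semicontinuity and ``generic catalecticant ranks''. This is a legitimate alternative route, but as written it has a gap: the central claim---that for generic $p_0,\dots,p_n\in K[u,v]_{d-1}$ the $(n+1)i$ order-$(i-1)$ partials span a subspace of $K[u,v]_{d-i}$ of dimension exactly $\min((n+1)i,\,d-i+1)$---is not obvious. The $(n+1)i$ partials are not unconstrained elements of $K[u,v]_{d-i}$ (they satisfy mixed-partial identities), so the usual ``generic vectors are independent'' slogan does not apply directly; one still has to exhibit a single tuple where the rank is maximal, and semicontinuity then propagates it. You acknowledge this yourself in the last sentence, and the ``spreading out the $u,v$-bidegrees'' you suggest there is precisely what the paper's explicit example does. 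So your proposal is correct modulo that final verification, and its natural completion converges to the paper's proof.
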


\begin{proof}
Notice that, for a general Perazzo hypersurface, the contribution to $h_i$ from 
$$
\dim \left\langle \frac{\partial ^{i}f}{\partial u^{j_0}\partial v^{j_1}} \mid j_0+j_1=i \right\rangle
$$
is $i+1$ in the best case scenario, regardless the value of $i$. As for the other contribution, let $t:=\lfloor \frac{d+1}{n+2} \rfloor$. The maximum value is 
$$
min\left((n+1)\binom{i-1+1}{1},d-i+1\right) = 
\begin{cases}
    (n+1)i \quad \text{for } i \le t \\\\
    d-i+1 \ \ \text{for } i > t
\end{cases} 
$$
because the number of linearly independent monomials of degree $d-i$ in 2 variables is bounded by $d-i+1$.
Thus, for $1\le i \le \lfloor d/2 \rfloor$,
$$
h_i^{max} = \begin{cases}
    (n+2)i+1 \quad \text{for } i \le t \\
    d+2 \quad \quad \quad \quad \text{for } i > t
\end{cases} .
$$
A straightforward computation shows that the following example achieves the upper bound for any $d\ge n+1$. Write $d=(n+1)r+\epsilon$, with $0\le \epsilon \le n$. Take $f=x_0p_0(u,v)+\cdots +x_np_n(u,v)$ with 
$$
p_j(u,v) = \begin{cases}
    \sum_{i=j(r+1)}^{(j+1)(r+1)-2}u^{d-1-i}v^i + u^{\left(d-1- (r+\sum_{k=0}^j(r+1)\right)}v^{\left(r+\sum_{k=0}^j(r+1)\right)} \quad \text{for } 0 \le j < \epsilon \\ \\
    \sum_{i=jr+\epsilon}^{(j+1)r-1+\epsilon}u^{d-1-i}v^i \qquad\qquad\qquad\qquad\qquad\qquad\qquad\qquad\quad \text{for } \epsilon \le j \le n
\end{cases} .
$$

Since the dimension of the vector space of forms of degree $d-1$ in two variables is $d$,  the partial derivatives of order $i-1$ with respect to $u,v$ of $p_0(u,v),\dots ,p_n(u,v)$
yield $(n+1)i$ linearly independent forms, provided that $(n+1)i<d-i+1$. In consequence, the artinian Gorenstein algebra $A_f$ associated to the Perazzo hypersurface $V(f)\subset \PP^{n+2}$ has the maximum $h$-vector described above.
\end{proof}

\section{Lefschetz properties of artinian Gorenstein algebras associated to Perazzo hypersurfaces}\label{lefschetz}

In \cite{MW} Maeno and Watanabe found a connection between the vanishing of higher order hessians and Lefschetz properties, in particular with the SLP; then Gondim in \cite{G} studied the WLP for some hypersurfaces  with vanishing hessian. Their results will  be crucial in this section and to state them we need to fix some extra notation.

\vskip 4mm
\begin{definition}
Let $f\in K[x_0, \dots  , x_n]$ be a homogeneous polynomial and let $ A = R/\Ann_R(f)$ be the
associated artinian Gorenstein algebra. Let $\mathcal{B} = \{w_j \mid 1\le j \le h_t:=\dim A_t \} \subset A_t$ be an ordered
$K$-basis. The $t$-th (relative) {\em Hessian matrix} of $f$ with respect to $\mathcal{B}$ is defined as the $h_t \times h_t$ matrix:
$$
\Hess_f^t=(w_iw_j(f))_{i,j}.$$
The $t$-th {\em Hessian of } $f$ {\em with respect to} $\mathcal{B}$ is
$$ \hess _f^t=\det (\Hess _f^t).
$$
\end{definition}

The 0-th Hessian is just the polynomial $f$ and, in the case $\dim A_1=n+1$, the 1st Hessian, with respect to the standard basis, is the classical Hessian. It is worthwhile to point out that the definition of Hessians and Hessian matrices of order $t$ depends on the choice of a basis of $A_t$ but  the vanishing of the $t$-th Hessian
is independent of this choice.

We end this preliminary section with a result due to Watanabe which establishes a useful link between the failure of Lefschetz properties and the vanishing of higher order Hessians.

\begin{theorem} \label{watanabe}
Let $f\in K[x_0, \dots  , x_n]$ be a homogeneous polynomial of degree $d$ and let $ A = R/\Ann_R(f)$ be the
associated artinian Gorenstein algebra. $\ell=a_0y_0+\cdots +a_ny_n\in A_1$ is a strong Lefschetz element of $A$ if and only if $\hess _f^t(a_0,\dots, a_n)\ne 0$ for $t=1,\dots,[d/2]$. More precisely, up to a multiplicative constant, $\hess _f^t(a_0,\dots, a_n)$ is the determinant of the dual of the multiplication map
$\times \ell^{d-2t}: [A]_{t}  \longrightarrow  [A]_{d-t}.$
\end{theorem}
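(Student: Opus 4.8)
The plan is to prove the sharper ``More precisely'' assertion first, since the stated equivalence follows from it at once: once we know that, up to a nonzero constant, $\hess_f^{t}(a_0,\dots,a_n)$ equals the determinant of the multiplication map $\times\ell^{d-2t}\colon [A]_t\to[A]_{d-t}$ for each fixed $t$, that map has maximal rank (here, is bijective, since $\dim[A]_t=\dim[A]_{d-t}$ by the Gorenstein symmetry) if and only if the corresponding Hessian does not vanish at $(a_0,\dots,a_n)$. Ranging over the relevant $t$ and recalling that $\ell$ is a strong Lefschetz element precisely when all the maps $\times\ell^{d-2t}$ are bijective (the narrow-sense SLP, which coincides with the SLP here because $A$ is Gorenstein, cf.\ Proposition~\ref{characterizationGorenstein}) then yields the equivalence.

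The main tool is Macaulay--Matlis duality, in the form of the perfect pairing
\[
[A]_t\times[A]_{d-t}\longrightarrow [A]_d\cong K,\qquad (u,v)\longmapsto (uv)(f),
\]
which is nondegenerate exactly because $A$ is Gorenstein of socle degree $d$. I would use it to identify $[A]_{d-t}$ with the dual space $[A]_t^{*}$ and then observe that, under this identification, the map $\times\ell^{d-2t}\colon[A]_t\to[A]_{d-t}$ is the linear map attached to the symmetric bilinear form
\[
Q_t\colon [A]_t\times[A]_t\longrightarrow K,\qquad Q_t(u,v)=\bigl(uv\,\ell^{d-2t}\bigr)(f),
\]
in the sense that $\langle \times\ell^{d-2t}(u),v\rangle=Q_t(u,v)$. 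A linear map into a dual space is an isomorphism if and only if the associated bilinear form is nondegenerate, i.e.\ if and only if the Gram matrix $\bigl(Q_t(w_i,w_j)\bigr)_{i,j}$ of $Q_t$ in the chosen basis $\mathcal{B}=\{w_1,\dots,w_{h_t}\}$ has nonzero determinant; moreover this Gram matrix is exactly the matrix of (the dual of) $\times\ell^{d-2t}$ with respect to $\mathcal{B}$ and its dual basis, so the two determinants agree up to a nonzero basis factor.

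The heart of the argument is to recognize this Gram matrix as an evaluated Hessian matrix. Using that the differential operators commute together with the elementary polarization identity $\ell^{m}(p)=m!\,p(a_0,\dots,a_n)$, valid for every homogeneous $p$ of degree $m=d-2t$ (applied to $p=(w_iw_j)(f)$, which is homogeneous of degree $d-2t$), one gets
\[
Q_t(w_i,w_j)=\bigl(w_iw_j\,\ell^{d-2t}\bigr)(f)=\ell^{d-2t}\bigl((w_iw_j)(f)\bigr)=(d-2t)!\,\bigl((w_iw_j)(f)\bigr)(a_0,\dots,a_n).
\]
Hence the Gram matrix equals $(d-2t)!$ times the Hessian matrix $\Hess_f^{t}$ evaluated at $(a_0,\dots,a_n)$, and taking determinants gives
\[
\det\bigl(Q_t(w_i,w_j)\bigr)_{i,j}=\bigl((d-2t)!\bigr)^{h_t}\,\hess_f^{t}(a_0,\dots,a_n),
\]
the desired proportionality with nonzero constant $\bigl((d-2t)!\bigr)^{h_t}$. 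Combined with the duality step this proves the ``More precisely'' claim; it also re-derives that the vanishing of $\hess_f^{t}$ at $(a_0,\dots,a_n)$ is basis-independent, since changing $\mathcal{B}$ replaces $\Hess_f^{t}$ by a congruent matrix and only rescales the determinant by a nonzero square.

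I expect the main obstacle to be the clean bookkeeping in the duality step: making precise that, with respect to $\mathcal{B}$ and the basis of $[A]_{d-t}\cong[A]_t^{*}$ dual to it, the matrix of $\times\ell^{d-2t}$ is \emph{literally} the Gram matrix of $Q_t$ rather than a twisted version, and checking that the accumulated proportionality constant is nonzero in characteristic zero. The remaining care is at the extreme indices: the case $t=[d/2]$ with $d$ even corresponds to the identity map, whose determinant is the nonzero Gram determinant of the perfect pairing on $[A]_{d/2}$, while the boundary index $t=0$ recovers $\hess_f^{0}=f$ and the condition $f(a_0,\dots,a_n)\neq0$ needed for $\times\ell^{d}\colon[A]_0\to[A]_d$ to be bijective; both are handled by the same pairing argument.
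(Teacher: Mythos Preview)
Your argument is correct and complete. Note, however, that the paper does not supply its own proof of this theorem at all: it simply writes ``See \cite[Theorem 4]{w1} and \cite[Theorem 3.1]{MW}.'' So there is no in-paper argument to compare against.

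What you have written is essentially the standard proof that appears in those cited references: the Gorenstein pairing $(u,v)\mapsto (uv)(f)$ identifies $[A]_{d-t}$ with $[A]_t^{*}$, the map $\times\ell^{d-2t}$ is then represented by the Gram matrix of the form $Q_t(u,v)=(uv\,\ell^{d-2t})(f)$, and the polarization identity $\ell^{m}(p)=m!\,p(a_0,\dots,a_n)$ for $p$ homogeneous of degree $m$ rewrites each Gram entry as $(d-2t)!$ times the corresponding entry of $\Hess_f^{t}$ evaluated at $(a_0,\dots,a_n)$. Your bookkeeping on the constants and the basis-independence remark are both accurate, and your treatment of the boundary cases $t=0$ and $t=\lfloor d/2\rfloor$ is fine. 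In short, you have reproduced the argument that the paper outsources to Watanabe and Maeno--Watanabe.
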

\begin{proof}
See \cite[Theorem 4]{w1} and \cite[Theorem 3.1]{MW}.
\end{proof}

The following example illustrates the usefulness of Watanabe's theorem.

\begin{example}\label{exampleWatanabe} Consider the quartic 4-fold in $\PP^5$ of equation:
$$
f=x_0u^3+x_1u^2v+x_2uv^2+x_3v^3\in S:=K[x_0,x_1,x_2,x_3,u,v].
$$
Let $R:=K[y_0,y_1,y_2,y_3,U,V]$ be the ring of differential operators on $S$. Call $A_f=R/Ann_R(f)$ the associated graded artinian Gorenstein algebra. It has $h$-vector $(1,6,6,6,1)$. We will apply Watanabe's  criterion to check that $A_f$ fails the WLP in degree 3.
$$
\Hess_f=\begin{pmatrix}
0 & 0 & 0 & 0 & 3u^2 & 0 \\
0 & 0 & 0 & 0 & 2uv & u^2 \\
0 & 0 & 0 & 0 & v^2 & 2uv \\
0 & 0 & 0 & 0 & 0 & 3v^2 \\
3u^2 & 2uv & v^2 & 0 & 2x_0u+2x_1v & 2x_1u+2x_2v \\
0 & u^2 & 2uv & 3v^2 & 2x_1u+2x_2v & 2x_2u+6x_3v 
\end{pmatrix} .
$$
Notice that, for any $(a_0,a_1,a_2,a_3,a_4,a_5)\in K^6$, we have  $\hess _f(a_0,a_1,a_2,a_3,a_4,a_5)=0$. So, for any $\ell\in [A_f]_1$, the multiplication map $
\times \ell^2: [A_f]_{1}  \longrightarrow  [A_f]_{3}
$ has zero determinant. This implies that for any $\ell\in [A_f]_1$, the multiplication map $
\times \ell: [A_f]_{2}  \longrightarrow  [A_f]_{3}$ is not surjective. Therefore,  $A_f$ fails the WLP.
\end{example}

\begin{remark}
    We realized that, for all the examples we produced of Perazzo hypersurfaces in $\PP^{n+2}$ with artinian Gorenstein algebras not satisfying the WLP, the corresponding $h$-vector reaches the Sperner number in two or more positions. Certainly, it is straightforward to check, either by using Theorem~\ref{lower} or with the help of Macaulay2 software \cite{M2}, that the $h$-vector of $A$ in Example~\ref{exampleWatanabe} is $(1, \ 6, \ 6, \ 6, \ 1)$. This fact led us to conjecture that for any Perazzo hypersurface in $\PP^{n+2}$, whose artinian Gorenstein algebra has the WLP, the corresponding $h$-vector necessarily reaches the Sperner number in at most one position. Moreover, this condition is sufficient to guarantee the fulfilment of the WLP, as we prove in Theorem~\ref{thm:WLP}.
\end{remark}

We  now give the classification of the Hilbert functions of artinian Gorenstein algebras associated to Perazzo hypersurfaces of degree $d \ge n+1$ in $\PP^{n+2}$ with the WLP. The proof is analogous to the proof of \cite[Theorem 3.11]{A} and we include it for the sake of completeness. Recall that such algebras have Sperner number at most $d+2$.

\begin{proposition}
    \label{characterizationLeftToRight}
    Let $A_f$ be an artinian Gorenstein algebra associated to a Perazzo hypersurface $V(f)\subset \PP^{n+2}$ of degree $d\ge n+1$. Let $(h_0,h_1,\ldots ,h_d)$ be its $h$-vector. If $A_f$ has the WLP, then $\# \{i \mid h_i=d+2 \} \le 1$.
\end{proposition}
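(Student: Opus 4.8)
The plan is to argue by contradiction via the short exact sequence of Proposition~\ref{exactsequence}, exploiting that the weak Lefschetz property forces the multiplication maps to have maximal rank at every step, while the degree of the dual generator drops by one. Assume $A_f$ has the WLP and fix a Lefschetz element $\ell\in [A_f]_1$; write $I=\Ann_R(f)$ and $A'=R/(I:\ell)$. By Proposition~\ref{exactsequence}, $A'=A_{\ell\circ f}$ is an artinian Gorenstein algebra of socle degree $d-1$ and there is an exact sequence $0\to A'(-1)\to A_f\to A_f/(\ell)\to 0$ in which the first map is multiplication by $\ell$. Hence, in each degree, $h_{i-1}(A')=\dim(\ell\cdot[A_f]_{i-1})$ equals the rank of $\times\ell\colon [A_f]_{i-1}\to[A_f]_i$. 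Because $\ell$ is a Lefschetz element this rank is maximal, so
\[
h_i(A')=\min\bigl(h_i(A_f),\,h_{i+1}(A_f)\bigr)\qquad\text{for all }i\ge 0.
\]

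Next I would bound the Sperner number of $A'$. Writing $\ell=a_0y_0+\cdots+a_ny_n+b_0U+b_1V$, the computation in Lemma~\ref{lemmapartials} shows that $\ell\circ f=\sum_j x_j\tilde p_j+\tilde g$ with $\tilde p_j,\tilde g\in K[u,v]$; that is, the dual generator of $A'$ again has the special Perazzo shape, now of degree $d-1$. The Sperner bound of Lemma~\ref{key123}(3) uses only this shape (a single $x$-derivative contributes at most $\dim K[u,v]_{(d-1)-i}=(d-1)-i+1$ forms, the purely $u,v$-derivatives at most $i+1$), so the same argument gives $h_i(A')\le(i+1)+\bigl((d-1)-i+1\bigr)=d+1$ for every $i$. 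I want to stress that this step does not require $\ell\circ f$ to be a genuine Perazzo form: even when $d=n+1$ (so that the $\tilde p_j$ become linearly dependent and Lemma~\ref{lemmapartials} no longer applies), the shape alone yields $\mathrm{Sperner}(A')\le d+1$.

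Finally I would combine these two facts. Suppose, for contradiction, that $\#\{i\mid h_i=d+2\}\ge 2$. Since the Sperner number of $A_f$ is at most $d+2$ (Lemma~\ref{key123}(3)), the value $d+2$ is the maximum of the $h$-vector and is attained at least twice. As the WLP implies that the $h$-vector of $A_f$ is unimodal, the set of indices realizing the maximum is a contiguous interval (see Definition~\ref{defUnimodal}), so there exists $i$ with $h_i(A_f)=h_{i+1}(A_f)=d+2$. Plugging this into the displayed equality gives $h_i(A')=\min(d+2,d+2)=d+2$, contradicting $h_i(A')\le d+1$. Therefore $\#\{i\mid h_i=d+2\}\le 1$. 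The main obstacle is conceptual rather than computational: one must notice that the right comparison object is $A'=A_{\ell\circ f}$, whose socle degree is one smaller and whose Sperner bound is therefore strictly below $d+2$, and that the passage to \emph{consecutive} maximizers is supplied by the unimodality that WLP guarantees.
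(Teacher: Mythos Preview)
Your argument is correct and takes a genuinely different route from the paper. The paper proves the contrapositive via the Hessian criterion (Theorem~\ref{watanabe}): it observes that when $h_t=d+2$ for the ``middle'' value $t=\lfloor(d-1)/2\rfloor$ (resp.\ $t=\lfloor(d-2)/2\rfloor$), at most $t+1$ of the $d+2$ basis elements of $[A_f]_t$ are monomials in $u,v$ alone, so more than half involve some $x_j$; since any product of two such basis elements annihilates $f$, $\Hess_f^t$ has a square zero block of size exceeding $(d+2)/2$ and hence $\hess_f^t=0$, forcing the failure of WLP. Your proof instead stays entirely within the exact sequence of Proposition~\ref{exactsequence}: you read off $h_i(A')=\min(h_i,h_{i+1})$ from the Lefschetz condition, bound the Sperner number of $A'=A_{\ell\circ f}$ by $d+1$ directly from the shape of $\ell\circ f$ (nicely noting that linear independence of the $\tilde p_j$ is irrelevant here, so the edge case $d=n+1$ is covered), and then use unimodality---an immediate consequence of WLP---to produce two \emph{consecutive} indices with $h_i=h_{i+1}=d+2$. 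The paper's approach has the virtue of tying the failure of WLP explicitly to the vanishing-Hessian theme that motivates Perazzo hypersurfaces; your approach is more self-contained, avoiding Theorem~\ref{watanabe} altogether and relying only on the exact sequence and the elementary dimension count behind Lemma~\ref{key123}(3).
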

\begin{proof}
    Let us assume $\#\{i : h_i=d+2\}\ge 2$ (resp. $\ge 3$) if $d$ is odd (resp. even). Notice that a basis of the homogeneous component in degree $t_o:=\frac{d-1}{2}$ (resp. $t_e:=\frac{d-2}{2}$) of $A_f$ is formed by $d+2$ monomials of which, at most, $\frac{d+1}{2}$ (resp. $\frac{d}{2}$) involve only $u,v$ and hence, at least, $\frac{d+3}{2}$ (resp. $\frac{d+4}{2}$) involve $x_0,\dots,x_n$. In consequence, the Hessian matrix with respect to such a basis has a block of zeros of size $\frac{d+3}{2}$ (resp. $\frac{d+4}{2}$) or bigger, which implies that the determinant vanishes. Thus, the multiplication map by a general linear form does not have maximal rank in degree $\frac{d+1}{2}$ (resp. $\frac{d}{2}$) and therefore $A_f$ fails the WLP (Theorem~\ref{watanabe}).
\end{proof}

\begin{corollary}\label{necessaryConditionWLP}
    Let $A_f$ be an artinian Gorenstein algebra associated to a Perazzo hypersurface $V(f)\subset \PP^{n+2}$ of degree $d\ge n+1$. If $A_f$ has the WLP, then $d\ge 2n$.
\end{corollary}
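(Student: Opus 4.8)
The plan is to prove the contrapositive: assuming $n+1 \le d \le 2n-1$, I will show that every artinian Gorenstein algebra $A_f$ associated to a Perazzo hypersurface of degree $d$ in $\PP^{n+2}$ attains its Sperner number $d+2$ in at least two positions, so that Proposition~\ref{characterizationLeftToRight} forbids the WLP. Hence the WLP forces $d \ge 2n$.

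The crucial observation is that the minimum $h$-vector of Theorem~\ref{lower} and the Sperner bound of Lemma~\ref{key123}(3) together pin certain entries to the \emph{exact} value $d+2$. Indeed, for $1\le i \le \lfloor d/2\rfloor$ one has $\min(2n+2, d+2, n+2+i) \le h_i(A_f) \le d+2$, so if $\min(2n+2, d+2, n+2+i) = d+2$ then necessarily $h_i(A_f)=d+2$. An elementary check shows this minimum equals $d+2$ precisely when $d\le 2n$ and $i \ge d-n$; since we assume $d\le 2n-1$, the relevant constraint reduces to $i \ge d-n$.

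It remains to exhibit two distinct indices meeting this constraint. Set $i_0 := d-n$. Then $i_0 \ge 1$ because $d \ge n+1$, and $i_0 \le \lfloor d/2\rfloor$ because $d \le 2n-1$ forces $\lceil d/2\rceil \le n$, i.e. $d-n \le \lfloor d/2 \rfloor$; hence $h_{i_0}(A_f)=d+2$. By Gorenstein symmetry $h_{i_0}=h_{d-i_0}$, and $d-i_0 = n$, so $h_n(A_f)=d+2$ as well. Finally, $i_0 = n$ would mean $d=2n$, which is excluded; thus $i_0$ and $n$ are two distinct positions at which the $h$-vector reaches $d+2$, and the WLP fails by Proposition~\ref{characterizationLeftToRight}.

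I expect the only delicate point to be the floor/ceiling bookkeeping that guarantees $d-n \le \lfloor d/2\rfloor$ and that the mirror index $n$ is genuinely distinct from $d-n$; this is exactly where the threshold $d=2n$ enters, separating the case in which the minimum $h$-vector hits the Sperner number at a single central position (so WLP is not yet obstructed) from the case in which it does so at two symmetric positions (so WLP is forced to fail).
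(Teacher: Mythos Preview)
Your proof is correct and follows essentially the same approach as the paper: both argue the contrapositive by combining the lower bound of Theorem~\ref{lower} with the Sperner bound of Lemma~\ref{key123}(3) to force $h_i=d+2$ at two or more positions whenever $d<2n$, and then invoke Proposition~\ref{characterizationLeftToRight}. The paper states this in one sentence, while you make the arithmetic explicit by exhibiting the specific indices $i_0=d-n$ and its mirror $n$; the floor/ceiling checks you flag are sound.
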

\begin{proof}
    If $d<2n$, then $\# \{i \mid h_i=d+2 \} > 1$ for the minimum possible $h$-vector of $A_f$ (Theorem \ref{lower}) and hence $\# \{i \mid h_i=d+2 \} > 1$ for any possible $h$-vector of $A_f$. Therefore, applying Proposition  \ref{characterizationLeftToRight} we obtain that $A_f$ does not have the WLP.
\end{proof}

\begin{corollary} \label{initialcase} Let $A_f$ be an artinian Gorenstein algebra associated to a Perazzo hypersurface $V(f)\subset \PP^{n+2}$ of degree $d$ and let $n>2$. It holds:
\begin{itemize}
    \item[(1)] If $d=n+1$ then the $h$-vector of $A_f$ is $(1,n+3,\dots,n+3,1)$ and hence $A_f$ fails WLP.
    \item[(2)] If $d=n+2$ then the $h$-vector of $A_f$ is $(1,n+3,n+4, \dots ,n+4,n+3,1)$ and hence $A_f$ fails WLP.
\end{itemize}
    \end{corollary}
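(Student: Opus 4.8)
The plan is to determine the $h$-vector in each case by squeezing it between the lower bound of Theorem~\ref{lower} and the upper bound of Theorem~\ref{upper}, and then to read off the failure of the WLP from the numerical obstruction already recorded in Corollary~\ref{necessaryConditionWLP}. The key observation is that, in these two small degrees, the componentwise minimal and maximal $h$-vectors collapse to the same sequence, so that the $h$-vector of \emph{every} Perazzo algebra of that degree is uniquely determined.

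First I would treat $d=n+1$. Evaluating the lower bound $\min(2n+2,d+2,n+2+i)$ at $d=n+1$ yields $n+3$ for every $1\le i\le\lfloor d/2\rfloor$, since $n+3\le 2n+2$ and $n+3\le n+2+i$. On the other side, the upper bound has $t=\lfloor(d+1)/(n+2)\rfloor=1$, so it equals $(n+2)\cdot 1+1=n+3$ at $i=1$ and $\min((n+2)i+1,d+2)=d+2=n+3$ for $i>1$. Thus the two bounds agree componentwise, and by symmetry the $h$-vector is forced to be $(1,n+3,\dots,n+3,1)$.

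Next, for $d=n+2$ I would run the same squeeze. The lower bound gives $n+3$ at $i=1$ and $\min(2n+2,n+4,n+2+i)=n+4$ for $2\le i\le\lfloor d/2\rfloor$ (using $n>2$ for $2n+2\ge n+4$); the upper bound again has $t=1$, producing $n+3$ at $i=1$ and $d+2=n+4$ for $i>1$. Hence both bounds coincide and, invoking symmetry, the $h$-vector equals $(1,n+3,n+4,\dots,n+4,n+3,1)$.

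Finally, the failure of the WLP in both cases is immediate: since $n>2$ we have $n+1<2n$ and $n+2<2n$, so in either case $d<2n$, and Corollary~\ref{necessaryConditionWLP} gives that $A_f$ does not satisfy the WLP. Alternatively one may argue directly: the Sperner number $d+2$ is attained at every interior position of the $h$-vector in case~(1) and at all positions $2\le i\le d-2$ in case~(2), so $\#\{i\mid h_i=d+2\}>1$ in both cases, and Proposition~\ref{characterizationLeftToRight} applies. I do not anticipate a genuine obstacle here; the only point requiring a little care is the arithmetic bookkeeping showing that the minimal and maximal bounds collapse to a single value, which is precisely what pins the $h$-vector down.
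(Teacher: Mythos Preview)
Your proposal is correct and matches the paper's intended argument. The paper states this corollary without proof, leaving it as an immediate consequence of the surrounding results; your squeeze between Theorems~\ref{lower} and~\ref{upper} to pin down the $h$-vector, followed by the appeal to Corollary~\ref{necessaryConditionWLP} (or equivalently Proposition~\ref{characterizationLeftToRight}), is exactly the implicit route.
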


\begin{remark}\label{casen2}
    Notice that, in case $n=2$ and $d=n+2=4$, the $h$-vector of $A_f$ is $(1,5,6,5,1)$ and $A_f$ has the WLP~\cite[Theorem~3.5]{G}.
\end{remark}

\begin{theorem}\label{thm:WLP}
Let $A_f$ be an artinian Gorenstein algebra associated to a Perazzo hypersurface $V(f)\subset \PP^{n+2}$ of degree $d\ge n+1$. Let $(h_0,h_1,\ldots ,h_d)$ be its $h$-vector. The algebra $A_f$ has the WLP if and only if $\# \{i \mid h_i=d+2 \} \le 1$.
\end{theorem}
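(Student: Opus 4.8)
The equivalence has one easy implication already in hand: the direction ``$A_f$ has the WLP $\Rightarrow \#\{i\mid h_i=d+2\}\le 1$'' is precisely Proposition~\ref{characterizationLeftToRight}. So the plan is to prove the converse: assuming $\#\{i\mid h_i=d+2\}\le 1$, exhibit a Lefschetz element. The first move is to reduce the WLP, which a priori asks for maximal rank in every degree, to a single multiplication map in the middle. Since $A_f/\ell A_f$ is standard graded, $[A_f/\ell A_f]_t=0$ forces $[A_f/\ell A_f]_{t+1}=0$, so surjectivity of $\times\ell$ propagates to all higher degrees; dually, by the Gorenstein pairing of Proposition~\ref{exactsequence}, injectivity of $\times\ell$ propagates to all lower degrees. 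Combining this with the symmetry and the unimodality of the $h$-vector (which we establish separately), $A_f$ has the WLP for general $\ell$ if and only if $\times\ell\colon [A_f]_{t_0}\to[A_f]_{t_0+1}$ is injective, where $t_0=\lceil d/2\rceil-1$ is the last step into the peak; for $d$ odd this map is between equidimensional spaces, so injectivity is bijectivity.

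The second step exploits the defining feature of a Perazzo form: $f$ is linear in $x_0,\dots,x_n$, hence $y_iy_j\circ f=\partial^2 f/\partial x_i\partial x_j=0$ and every product $y_iy_j$ vanishes in $A_f$. Thus each graded piece splits as a \emph{pure} part $\Pi_t$, spanned by the classes of the monomials in $U,V$, together with a \emph{single} part spanned by the classes of the monomials $y_m U^aV^b$. With respect to this splitting, multiplication by a general $\ell=a_0y_0+\cdots+a_ny_n+bU+cV$ is block lower triangular: the diagonal blocks are multiplication by $bU+cV$ on the pure and on the single part, while the off-diagonal block (coming from the $a_m$) sends pure to single only. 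The pure part is the image of $K[U,V]$ in $A_f$, that is, the two-variable algebra $K[U,V]/\Ann(p_0,\dots,p_n,g)$; since artinian algebras in two variables have the WLP, its diagonal block has maximal rank for general $bU+cV$. By block triangularity, $A_f$ then has the WLP exactly when the \emph{single-part} diagonal block is injective at the central step.

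The heart of the argument is therefore the single part. It is a finitely generated graded module built from the apolar algebras of the binary forms $p_0,\dots,p_n$, so a general $bU+cV$ should again realize maximal rank on it; granting this, the central single-part map is injective precisely when its Hilbert function $s_t:=h_t-\dim\Pi_t$ does not drop at the middle, i.e.\ $s_{t_0}\le s_{t_0+1}$. The closing step is the bookkeeping identity that, using the symmetry and unimodality of $(h_0,\dots,h_d)$ together with the explicit values of $\dim\Pi_t$ near the center, a drop $s_{t_0}>s_{t_0+1}$ is equivalent to the $h$-vector attaining the Sperner value $d+2$ in two positions (for $d$ odd) or three positions (for $d$ even). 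This is the mirror of the monomial count performed in Proposition~\ref{characterizationLeftToRight}, and it closes the equivalence.

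I expect the main obstacle to be this last translation, and the $d$ even case in particular. For $d$ even the critical map $[A_f]_{d/2-1}\to[A_f]_{d/2}$ is not square, so Theorem~\ref{watanabe}, which detects bijectivity of $\times\ell^{d-2t}$ via the nonvanishing of $\hess_f^t$, does not apply directly; one must instead control the rectangular single-part map and justify that a general linear form has maximal rank on a sum --- not a direct sum --- of the apolar algebras of the $p_m$, where relations among the summands can make $s_t$ genuinely decrease (as happens in Example~\ref{exampleWatanabe}). Pinning down $\dim\Pi_t$ at the center, where the degree-$d$ form $g$ and the degree-$(d-1)$ forms $p_m$ contribute in different degrees, is the delicate computational point on which the whole equivalence rests; once it is in place, comparing $s_{t_0}$ with $s_{t_0+1}$ against the bounds of Theorems~\ref{lower} and~\ref{upper} yields the stated characterization.
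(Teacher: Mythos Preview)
Your strategy is genuinely different from the paper's, and the gap you yourself flag is not the only one. The paper never decomposes $A_f$ structurally; it argues by induction on $d$. For a general $\ell$ the algebra $R/(\Ann f:\ell)\cong A_{\ell\circ f}$ is again a Perazzo algebra of degree $d-1$ (Lemma~\ref{lemmapartials}), Green's theorem (Theorem~\ref{green}) bounds the relevant entry of the $h$-vector of $R/(I,\ell)$ by $1$, and two applications of the snake lemma to the exact sequence of Proposition~\ref{exactsequence}, together with the induction hypothesis and Macaulay's inequality~(\ref{MacIneq}), force maximal rank of $\times\ell$ at the central degree. The argument is purely numerical.

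In your outline, the ``block lower triangular $\Rightarrow$ WLP iff the single block is injective'' step is not valid as stated. First, $\Pi_t+S_t$ need not be a direct sum when $g\neq 0$: any $q\in K[U,V]_t$ annihilating all the $p_m$ with $q\circ g$ a nonzero element of $\langle \partial^{t-1}p_m\rangle$ gives $0\neq[q]\in\Pi_t\cap S_t$. You therefore only have the filtration $S_t\subset[A_f]_t$, and the snake lemma does not yield your ``exactly when''. Second, and more seriously, for odd $d$ the symmetry $h_{t_0}=h_{t_0+1}$ forces $s_{t_0+1}-s_{t_0}=\pi_{t_0}-\pi_{t_0+1}$; so whenever $s_{t_0}<s_{t_0+1}$ (which does occur, e.g.\ when the span of partials of the $p_m$ is still growing at the middle), the pure quotient $A_f/S$ satisfies $\pi_{t_0}>\pi_{t_0+1}$ and its diagonal block is \emph{surjective but not injective}. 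Bijectivity of $\times\ell$ on $A_f$ then hinges on whether the connecting map $\ker(A_f/S)\to\mathrm{coker}(S)$ is an isomorphism, and that map is controlled by the $a_my_m$ part of $\ell$, which your diagonal analysis never touches. Thus neither the reduction to the single block nor the asserted equivalence ``$s_{t_0}>s_{t_0+1}\Leftrightarrow h$ hits $d+2$ twice'' is justified; this is precisely the work that the paper's inductive use of Green's theorem performs.
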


\begin{proof} 
   If $\#\{i : h_i=d+2\} > 1$, we already know that $A_f$ fails the WLP (Proposition~\ref{characterizationLeftToRight}). Let us now assume $\#\{i : h_i=d+2\}\le 1$ and prove that $A_f$ has the WLP.  By Corollary \ref{initialcase} and Remark~\ref{casen2}, the result holds for $d=n+1$ and $d=n+2$. We now     
   proceed by induction on $d$, treating the cases of $d$ even and $d$ odd separately.

    \noindent \textbf{Case} $d$ \textbf{even}. Write $d=2s$. Since $\#\{i : h_i=d+2\}\le 1$ and $h_i \le d+2$ for all $i$, we have $h_{s+1} \le d+1$. Let $I=\Ann f$. Take a general linear form $\ell \in A_f$ and consider the short exact sequence~(\ref{seq}). By Lemma \ref{lemmapartials}  $R/(I:\ell)$ is an artinian Gorenstein algebra associated to the Perazzo hypersurface  $\ell \circ f$, which has degree $d-1=2s-1$. Denote by $h_i'$ and $\tilde{h}_i$ the $h$-vectors of $R/(I,\ell)$ and $R/(I:\ell)$, respectively. Since $h_{s+1}\le d+1$, we get that $h_{s+1}'\le (h_{s+1})_{\langle s+1 \rangle} \le (d+1)_{\langle s+1 \rangle} \le 1$ by Green's theorem. In case $h_{s+1}'=0$ then we are done, because the map
    $$
    \times \ell:[R/I]_s \rightarrow [R/I]_{s+1}
    $$
    is surjective and hence $R/I$ has the WLP. Suppose instead that $h_{s+1}'=1$ and consider the following commutative diagram with exact rows.

    \begin{center}
        \begin{tikzpicture}\label{diagramaElaPrima}
            \node (up0) at (0,0) {$[R/I]_s$};
            \node (up-1) at (-3,0) {$[R/(I:\ell)]_{s-1}$};
            \node (up-2) at (-6,0) {$0$};
            \node (up+1) at (3,0) {$[R/(I,\ell)]_s$};
            \node (up+2) at (6,0) {$0$};
            \node (down0) at (0,-2) {$[R/I]_{s+1}$};
            \node (down-1) at (-3,-2) {$[R/(I:\ell)]_{s}$};
            \node (down-2) at (-6,-2) {$0$};
            \node (down+1) at (3,-2) {$[R/(I,\ell)]_{s+1}$};
            \node (down+2) at (6,-2) {$0$};
            \draw[->] (up-2)--(up-1);
            \draw[->] (up-1)--(up0);
            \draw[->] (up0)--(up+1);
            \draw[->] (up+1)--(up+2);
            \draw[->] (down-2)--(down-1);
            \draw[->] (down-1)--(down0);
            \draw[->] (down0)--(down+1);
            \draw[->] (down+1)--(down+2);
            \draw[->] (up-1)--(down-1)node[midway,right] {$\times \ell'$};
            \draw[->] (up0)--(down0)node[midway,right] {$\times \ell'$};
            \draw[->] (up+1)--(down+1)node[midway,right] {$\times \ell'$};
        \end{tikzpicture}
    \end{center}
    where $\ell'\in R/I$ is a general linear form. The rightmost vertical map is an epimorphism. On the other hand, since $\tilde{h}_i=h_{i+1}-h_{i+1}'$, then $\tilde{h}_s < h_{s+1}\le d+1$ and hence, by the induction hypothesis, $R/(I:\ell)$ has the WLP. But $\tilde{h}_{s}=\tilde{h}_{s-1}$ and hence the leftmost vertical map is an isomorphism. Using the snake lemma we conclude that the  middle vertical map is surjective and $R/I$ has the WLP.

    \noindent \textbf{Case} $d$ \textbf{odd}. Write $d=2s+1$. Now we have $h_s=h_{s+1}\le d+1$. Again, by Green's theorem, it is enough to consider  the case $h_{s+1}'= 1$. By Macaulay's inequality (\ref{macaulay}), $h_i'\le 1$ for all $i>s$. Again we will apply the snake lemma to the above diagram. First we observe that the rightmost vertical map is surjective for $i>s$ and the leftmost one is surjective, because $\tilde{h}_{i-1} > \tilde{h}_i$ and $R/(I:\ell)$ has the WLP since it has socle degree $2s$. 
    Thus, the middle vertical map is surjective for $i>s$ and, by duality, it is  injective for $i<s$. Therefore,  $h_i'=h_i-h_{i-1}$ for $i\le s$. 
    
    Next we claim that $h_i'=1$ for some $i\le s$, or equivalently $h_i=h_{i-1}+1$. If $h_i > h_{i-1}+1$ for each $i=2,\dots,s$ then we would have $h_s\ge h_1+2(s-1)\ge 5+2(s-1)=d+2$, which contradicts the assumption $h_s\le d+1$. Thus, $h_i'=1$ for some $i\le s$ and hence, by Macaulay's inequality, $h_s'=1$. Finally, we consider again the above diagram: the leftmost vertical map is injective and the rightmost one is bijective, since $h_{s}'=h_{s+1}'=1$. In consequence,  the middle one is injective. Therefore $R/I$ has the WLP.
\end{proof}

\begin{remark}
Theorem~\ref{thm:WLP} does not generalize to Perazzo hypersurfaces in $\PP^{n+m}$, where $m>2$. For instance, consider the sextic 5-fold in $\PP^{6}$ of equation:
\begin{align*}
f &= x_0(u^5+u^4v+u^4w+v^4u) \\
  &+ x_1(v^5+v^4w+w^4u+w^4v) \\
  &+ x_2(w^5+u^3vw+v^3uw+w^3uv) \\
  &+ x_3(u^2vw^2+u^2v^2w+v^2uw^2) \in S:=K[x_0,x_1,x_2,x_3,u,v,w].    
\end{align*}
A straightforward computation shows that the $h$-vector of $A_f=R/\Ann_R(f)$ is $h=(1,7,18,20,18,7,1)$ and hence it reaches the Sperner number in at exactly one position. On the other hand, $A_f$ does not satisfy the WLP. Certainly, if $A_f$ had the WLP, then the $h$-vector of the artinian Gorenstein algebra $R/(\Ann_R(f) : \ell)$ associated to the Perazzo form $\ell\circ f$, with $\ell \in [A_f]_1$, would be $(1,7,18,18,7,1)$ (Proposition~\ref{exactsequence}). But there is no Perazzo form in $K[x_0,x_1,x_2,x_3,u,v,w]_5$ with such $h$-vector because the maximum value of $h_2$ for such Perazzo forms is $16$. Indeed, if $A$ is an artinian Gorenstein algebra associated to a Perazzo form in $K[x_0,x_1,x_2,x_3,u,v,w]_5$,
\begin{align*}
    h_2(A) &= \dim \left\langle \frac{\partial ^{2}f}{\partial u^{j_0}\partial v^{j_1}\partial w^{j_2}} \mid j_0+j_1+j_2=2 \right\rangle \\
    &+  \dim \left\langle \frac{\partial ^{2}f}{\partial x_0^{i_0}\partial x_1^{i_1}\partial x_2^{i_2} \partial x_3^{i_3}\partial u^{j_0}\partial v^{j_1}\partial w^{j_2}} \mid i_0+i_1+i_2+i_3=1; j_0+j_1+j_2=1 \right\rangle.
\end{align*}
Thus,
$$
h_2^{max}(A)=\binom{2+2}{2} + min \left( 4\cdot 3, \binom{3+2}{2}\right) = 16.
$$
\end{remark}

\begin{corollary}\label{WLPminimal}
Let $X\subset \PP^{n+2}$ be a Perazzo hypersurface of degree $d\ge n+1$ and equation $$f=x_0p_0(u,v)+x_1p_1(u,v)+\cdots +x_np_n(u,v)+g(u,v)\in S_d=K[x_0,\dots ,x_n,u,v]_d.$$ Let $R=K[y_0,\dots ,y_n,U,V]$ be the ring of differential operators on $S$. If $A_f=R/\Ann_R(f)$ has minimal $h$-vector, then $A_f$ fails WLP if and only if $d < 2n$.
\end{corollary}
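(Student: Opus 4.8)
The plan is to reduce everything to the combinatorial criterion of Theorem~\ref{thm:WLP}, which asserts that $A_f$ has the WLP precisely when its $h$-vector attains the Sperner number $d+2$ in at most one position. Since $A_f$ is assumed to have the minimal $h$-vector, I would substitute the explicit formula of Theorem~\ref{lower}, namely $h_i=\min(2n+2,\,d+2,\,n+2+i)$ for $1\le i\le\lfloor d/2\rfloor$ (extended by symmetry to $0\le i\le d$), and simply count the indices $i$ for which $h_i=d+2$.

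First I would record that, because $h_i=h_{d-i}$, the equality $h_i=d+2$ holds over the full range $0\le i\le d$ exactly when $d-n\le i\le n$. Indeed, for $i\le\lfloor d/2\rfloor$ one needs $n+2+i\ge d+2$ together with $2n+2\ge d+2$, that is $i\ge d-n$ and $d\le 2n$; the indices in the right half are handled by reflecting through the middle, giving $i\le n$. Hence the number of positions reaching $d+2$ equals $\max(0,\,2n-d+1)$. Before drawing any conclusion I would also check that $d+2$ genuinely is the maximum value of the minimal $h$-vector when $d<2n$, i.e.\ that $n+2+\lfloor d/2\rfloor\ge d+2$; writing $d=n+k$ with $1\le k\le n-1$ this is the elementary inequality $\lfloor (n+k)/2\rfloor\ge k$, which is exactly where the standing hypothesis $d\ge n+1$ (equivalently $k\ge 1$) enters.

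With the count $\max(0,\,2n-d+1)$ in hand the statement is immediate. If $d<2n$ then $2n-d+1\ge 2$, so $\#\{i\mid h_i=d+2\}>1$ and Theorem~\ref{thm:WLP} forces $A_f$ to fail the WLP. Conversely, if $d\ge 2n$ then $2n-d+1\le 1$, so the count is at most one and the same theorem guarantees that $A_f$ has the WLP. Combining the two implications yields that $A_f$ fails the WLP if and only if $d<2n$, which is the assertion.

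I expect the only delicate point to be the boundary case $d=2n$, where the two constants $2n+2$ and $d+2$ coincide and the count drops to exactly $1$, attained only at the single middle index $i=n$; one must confirm that this borderline configuration lands on the WLP side of Theorem~\ref{thm:WLP}. Everything else is routine bookkeeping with the floor function, and the reverse implication can alternatively be read off from Corollary~\ref{necessaryConditionWLP}.
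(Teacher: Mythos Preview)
Your proof is correct and follows essentially the same route as the paper: compute, using the explicit minimal $h$-vector of Theorem~\ref{lower}, how many indices satisfy $h_i=d+2$, and then invoke Theorem~\ref{thm:WLP}. The paper compresses the count into the single assertion ``$\#\{i\mid h_i=d+2\}\ge 2$ if and only if $d<2n$'' while you spell out the exact formula $\max(0,\,2n-d+1)$ and the boundary case $d=2n$, but the logical structure is identical.
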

\begin{proof} Assume that $A_f$ has minimal $h$-vector. By Theorem  \ref{lower}, we obtain that  $\# \{i \mid h_i=d+2 \} \ge 2$ if and only if $d < 2n$. Now the result immediately  follows  from Theorem \ref{thm:WLP}.
\end{proof}

\begin{corollary}\label{WLPmaximal}
 Let $X\subset \PP^{n+2}$ be a Perazzo hypersurface of degree $d\ge n+1$ and equation $$f=x_0p_0(u,v)+x_1p_1(u,v)+\cdots +x_np_n(u,v)+g(u,v)\in S_d=K[x_0,\dots ,x_n,u,v]_d.$$ Let $R=K[y_0,\dots ,y_n,U,V]$ be the ring of differential operators on $S$. If $A_f=R/\Ann_R(f)$ has maximum $h$-vector, then $A_f$ fails WLP.
\end{corollary}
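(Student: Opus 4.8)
The plan is to reduce the claim to what has already been established, namely Theorem~\ref{thm:WLP}, which says that $A_f$ has the WLP if and only if the Sperner number $d+2$ is attained in at most one position of the $h$-vector. So the entire task is to verify that when $A_f$ realizes the \emph{maximum} $h$-vector computed in Theorem~\ref{upper}, the value $d+2$ occurs in strictly more than one position. If I can show this, then Theorem~\ref{thm:WLP} immediately gives the failure of the WLP.

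First I would recall the explicit form of the maximum $h$-vector from Theorem~\ref{upper}: writing $t=\lfloor (d+1)/(n+2)\rfloor$, one has $h_i=(n+2)i+1$ for $1\le i\le t$ and $h_i=d+2$ for $t< i\le \lfloor d/2\rfloor$, with the remaining entries determined by the symmetry $h_i=h_{d-i}$. The key observation is that the plateau at the Sperner value $d+2$ is nonempty and, by symmetry, occupies at least two indices unless it is a single central entry. Concretely, I would count the number of indices $i$ in the range $t< i\le \lfloor d/2\rfloor$ for which $h_i=d+2$, and then double this count (adjusting by one if $d$ is even and the central index $d/2$ is itself in the plateau, since that index is its own symmetric partner).

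The main computation, then, is to confirm that the plateau contains at least two \emph{distinct} indices among $0,\dots,d$. Since $d\ge n+1$, one checks that $t<\lfloor d/2\rfloor$, so there is at least one index $i_0$ with $t<i_0\le\lfloor d/2\rfloor$ and $h_{i_0}=d+2$; its symmetric partner $d-i_0$ satisfies $d-i_0\ge \lceil d/2\rceil > t$ as well, and $d-i_0\ne i_0$ unless $i_0=d/2$. The only borderline situation is when the plateau reduces to the single central index $i_0=d/2$ (so $d$ even and $t=d/2-1$); in that case I must rule it out by showing that the range $t<i\le d/2$ then contains more than one integer, or equivalently that $t\le d/2-2$, which follows from $d\ge n+1$ after a short inequality check comparing $\lfloor (d+1)/(n+2)\rfloor$ with $d/2-1$.

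The step I expect to be the main obstacle is precisely this borderline inequality analysis: verifying uniformly in $d$ and $n$ (subject to $d\ge n+1$) that the Sperner plateau can never shrink to a single index, so that $\#\{i\mid h_i=d+2\}\ge 2$ always holds for the maximum $h$-vector. Once that purely arithmetic fact is nailed down, the conclusion is immediate from Theorem~\ref{thm:WLP}. I would therefore organize the proof as a brief invocation of Theorem~\ref{upper} to recall the shape of the maximum $h$-vector, followed by the counting argument showing the plateau has length at least two, and then a one-line appeal to Theorem~\ref{thm:WLP}.
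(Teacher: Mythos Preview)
Your overall strategy is precisely the paper's: read off the shape of the maximum $h$-vector from Theorem~\ref{upper} and feed it into Theorem~\ref{thm:WLP}. The paper's own proof is literally a one-line appeal to those two theorems.

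However, the step you correctly single out as ``the main obstacle'' is a real obstacle, and your proposed resolution is wrong. Take $n=2$, $d=4$: then $t=\lfloor 5/4\rfloor=1=d/2-1$, so the range $t<i\le d/2$ contains only $i=2$, and the maximum $h$-vector is $(1,5,6,5,1)$ with the Sperner value $d+2=6$ attained exactly once. The inequality $t\le d/2-2$ you claim to derive from $d\ge n+1$ fails here. Worse, this case is a genuine counterexample to the corollary as stated: Remark~\ref{casen2} records that for $n=2$, $d=4$ every Perazzo algebra has $h$-vector $(1,5,6,5,1)$ and \emph{does} satisfy the WLP. So the gap lies in the statement itself, and the paper's one-line proof glosses over the same exceptional case. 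A short check shows that $(n,d)=(2,4)$ is the only pair with $d\ge n+1$ for which the plateau collapses to a single index, so your argument goes through verbatim for every other $(n,d)$.
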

\begin{proof} It follows immediately from Theorems \ref{thm:WLP} and \ref{upper}. 
\end{proof}

\begin{theorem}
\label{thm:unimodal}
The $h$-vector of an artinian Gorenstein algebra associated to a Perazzo hypersurface of degree $d\ge n+1$ in $\PP ^{n+2}$ is unimodal.
\end{theorem}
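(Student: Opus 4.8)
The plan is to reduce at once to the case where $A_f$ fails the WLP: since the WLP implies unimodality, Theorem~\ref{thm:WLP} lets me assume that the Sperner number $d+2$ is attained by the $h$-vector in at least two positions. The key device is to split each graded piece $[A_f]_i$, viewed through Macaulay--Matlis duality as the space of $i$-th partials of $f=x_0p_0+\cdots+x_np_n+g$, into its $x$-linear part and its pure $(u,v)$ part. Since $f$ has $x$-degree one, every such partial is a sum of a pure $(u,v)$ form and an $x$-linear form; projecting onto the latter and applying rank--nullity gives $h_i=r_i+\dim U_i$, where $U_i\subseteq K[u,v]_{d-i}$ collects the pure $(u,v)$ forms in $[A_f]_i$ and $r_i$ is the rank of the $x$-linear parts of the $i+1$ derivatives $\partial^i f/\partial u^a\partial v^b$ with $a+b=i$. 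This refines the dichotomy already exploited in Theorems~\ref{lower} and~\ref{upper} and recovers $r_i\le i+1$, $\dim U_i\le d-i+1$, and hence the bound $h_i\le d+2$ of Lemma~\ref{key123}.

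Next I would record two structural facts. First, differentiating a pure $(u,v)$ element $w$ of $[A_f]_i$ once more keeps it pure and lands it in $[A_f]_{i+1}$, so $U_{i+1}$ contains all first partials $\partial w/\partial u,\,\partial w/\partial v$ of elements $w\in U_i$; since $\partial/\partial v\colon K[u,v]_{d-i}\to K[u,v]_{d-i-1}$ has the one-dimensional kernel $\langle u^{d-i}\rangle$, this gives the (weak) estimate $\dim U_{i+1}\ge \dim U_i-1$. Second, $r_i=i+1$ holds precisely when no operator $D\in K[U,V]_i$ annihilates all of $p_0,\dots,p_n$ simultaneously; as the operators that do form a homogeneous ideal of $K[U,V]$, the set $\{\,i : r_i=i+1\,\}$ is an initial segment $\{0,1,\dots,t\}$.

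The hard part, and the crux of the whole argument, is to show that in the failing case $t\ge\lfloor d/2\rfloor$. By Gorenstein symmetry the set $\{\,i : h_i=d+2\,\}$ is invariant under $i\mapsto d-i$; having at least two elements, its smallest element $a$ satisfies $a<d/2$, so its largest element $d-a$ exceeds $\lfloor d/2\rfloor$. Forcing equality in $h_{d-a}=r_{d-a}+\dim U_{d-a}\le d+2$ yields $r_{d-a}=(d-a)+1$, hence $d-a\le t$ and therefore $r_i=i+1$ for every $i\le\lfloor d/2\rfloor$. For such $i$ we then have $h_i=(i+1)+\dim U_i$, and the pure-part inequality gives $h_{i+1}\ge(i+2)+(\dim U_i-1)=h_i$. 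Thus $h$ is non-decreasing up to $\lfloor d/2\rfloor$ and, by symmetry, unimodal. I expect essentially all the difficulty to sit in this last step: it is the presence of two peaks at the Sperner number, together with the down-closedness of $\{\,i : r_i=i+1\,\}$, that forces the $x$-linear rank to stay maximal all the way to the middle, thereby upgrading the weak estimate $\dim U_{i+1}\ge\dim U_i-1$ into genuine monotonicity. By contrast, the decomposition $h_i=r_i+\dim U_i$ and the kernel computation for $\partial/\partial v$ are routine.
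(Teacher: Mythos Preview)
Your argument is correct, but it follows a genuinely different route from the paper's proof. The paper never splits into the WLP/non-WLP cases and never invokes Theorem~\ref{thm:WLP}; instead it observes that the upper bound $h_i\le\min((n+2)i+1,\,d+2)$ from Theorem~\ref{upper} implies $h_i\le\frac{1}{2}(i+3)(2d-3i)$ for $i\le d/2-1$, and then appeals directly to a general growth result for Gorenstein Hilbert functions (\cite[Proposition~2.6]{MNZ}) to conclude $h_{i+1}\ge h_i$. So the paper's proof is a two-line application of an external black box, valid uniformly for all Perazzo $f$ regardless of whether the WLP holds.

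Your approach, by contrast, is self-contained in the sense that it does not need the Migliore--Nagel--Zanello bound; the price is that you must first invoke Theorem~\ref{thm:WLP} (whose proof is considerably more involved than anything in the paper's proof of unimodality) and then carry out the structural decomposition $h_i=r_i+\dim U_i$. That decomposition, together with the down-closedness of $\{i:r_i=i+1\}$ coming from the ideal $\bigcap_k\Ann(p_k)$, is a nice internal argument and makes transparent \emph{why} two peaks at level $d+2$ force monotonicity up to the middle. It also gives slightly finer information (e.g.\ that $r_i=i+1$ persists all the way to $i=d-a$, well past the middle). The paper's route is shorter and independent of the WLP classification, but yours reveals more of the underlying combinatorics of the Perazzo form.
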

\begin{proof} According to Theorem~\ref{upper}, $h_i\le min((n+2)i+1,d+2)$. A simple computation shows that $h_i\le \frac{1}{2}(i+3)(2d-3i)$ for any $i\le (d/2)-1$. Thus, for any $i\le (d/2)-1$  we get that $h_{i+1}\ge h_i$  as a direct application of \cite[Proposition 2.6]{MNZ}. Therefore we have the unimodality of the $h$-vector of $A_f$.
\end{proof}

\begin{remark}
    Notice that Theorem~\ref{thm:unimodal} does not generalize to Perazzo hypersurfaces in $\PP^{n+3}$, as Stanley~\cite{s2} already showed: take the following form in $K[x_0,\dots,x_9,u,v,w]$
$$
f=x_0u^3+x_1v^3+x_2w^3+x_3u^2v+x_4u^2w+x_5v^2u+x_6v^2w+x_7w^2u+x_8w^2v+x_9uvw.
$$
Consider the artinian Gorenstein algebra $A$ associated to $f$. With the help of Macaulay2 software \cite{M2} or by hand, one checks that the $h$-vector of $A$ is $(1,13,12,13,1)$, which is not unimodal and hence $A$ does not satisfy the WLP.
\end{remark}

\end{document}